\def\@evenfoot{\rule{0pt}{20pt}[\today] \hfill}
\def\@oddfoot{\rule{0pt}{20pt}\hfill [\today]}
\def\today{September 30, 2013}
\newtheorem{theorem}{Theorem}[section]
\newtheorem{proposition}[theorem]{Proposition}
\newtheorem{corollary}[theorem]{Corollary}
\theoremstyle{definition}
\newtheorem{definition}[theorem]{Definition}
\newtheorem{example}[theorem]{Example}
\newtheorem{remark}[theorem]{Remark}
\newtheorem{exercise}[theorem]{Exercise}
\newtheorem*{Conjecture}{Conjecture}
\newtheorem*{Corollary}{Corollary}
\newtheorem*{Definition}{Definition}
\newtheorem*{Example}{Example}
\newtheorem*{Remark}{Remark}
\newtheorem*{Exercise}{Exercise}
\newtheorem*{Problem}{Problem}
\def\Broucek{\hbox{$\Freec_{\hskip -.15em\calP}A$}}
\def\Rafik{\hbox{$\Free_{\hskip -.15em \calP}A$}}
\def\eu{e^{(1)}}\def\Liek{\hbox{${\mathbb L \hskip -.25em}^k$}}
\def\Ass{{\mathcal A{\it ss\/}}}\def\Com{{\mathcal C{\it om\/}}}
\def\Lieop{\mathcal L{\it ie\/}}\def\calC{{\mathcal C}}
\def\calP{{\EuScript P}}\def\Free{{\mathbb F}}\def\Freec{{\mathbb F}^c}
\def\sign#1{(-1)^{#1}}\def\sgn#1{(-1)^\varepsilon}\def\Sym{{\mathbb S}}
\def\Ten{{\mathbb T}}\def\Tr{{\EuScript T}}\def\calA{{\mathcal A}}
\def\Ainfty{{$A_\infty$}}\def\Tc{{\mathbb T}^c}\def\Aut{{\rm Aut}}
\def\Linfty{{$L_\infty$}}\def\Sc{{\mathbb S}^c}\def\calL{{\mathcal L}}
\def\uAut{{\underline{\Aut}}(A)}\def\Boj{B\"orjeson}\def\zn#1{{(-1)^{#1}}}
\def\Algsdelta{{\tt Algs}^\Delta}\def\Cinfty{{$C_\infty$}}\def\NIC#1{\relax}
\def\Vect{{\tt Vect}}\def\Lie{\mathbb L}
\def\Liec{\hbox{${\mathbb L \hskip -.25em}^c$}}
\def\bfk{{\mathbb k}}\def\id{1\!\!1}\def\ot{\otimes}\def\Span{{\rm Span}}
\def\otexp#1#2{{#1}^{\otimes #2}}\def\epi{ \twoheadrightarrow}\def\nic{\relax}
\def\Rada#1#2#3{#1_{#2},\dots,#1_{#3}}\def\rada#1#2{{#1},\ldots,{#2}}
\def\cases#1#2#3#4{
                  \left\{
                         \begin{array}{ll}
                           #1,\ &\mbox{#2}
                           \\
                           #3,\ &\mbox{#4}
                          \end{array}
                   \right.
}
\def\tricases#1#2#3#4#5#6{
                  \left\{
                         \begin{array}{rl}
                           #1,\ &\mbox{#2}
                           \\
                           #3,\ &\mbox{#4}
                           \\
                           #5,\ &\mbox{#6}
                          \end{array}
                   \right.
}
\def\ctyricases#1#2#3#4#5#6#7#8{
                  \left\{
                         \begin{array}{rl}
                           #1,\ &\mbox{#2}
                           \\
                           #3,\ &\mbox{#4}
                           \\
                           #5,\ &\mbox{#6}
                           \\
                           #7,\ &\mbox{#8}
                          \end{array}
                   \right.
}
\def\Nat{{\EuScript Nat}}\def\cNat{c{\EuScript Nat}}\def\Fr{\EuScript Fr}
\def\uNat{\underline{\EuScript Nat}}
\def\lachtanek#1#2#3#4#5{
{% Picture saved by xtexcad 2.4
\unitlength=.12pt
\begin{picture}(160.00,140.00)(-10.00,50)
\thicklines
\put(120.00,-5){\makebox(0.00,0.00)[t]{\scriptsize $#5$}}
\put(0.00,-5){\makebox(0.00,0.00)[t]{\scriptsize $#4$}}
\put(90.00,40.00){\makebox(0.00,0.00){$#3$}}
\put(30.00,40.00){\makebox(0.00,0.00){$#2$}}
\put(60.00,110.00){\makebox(0.00,0.00){$#1$}}
\put(60.00,70.00){\qbezier(0,0)(30,-30)(60,-60)}
\put(60.00,70.00){\qbezier(0,0)(-30,-30)(-60,-60)}
\put(60.00,140.00){\line(0,-1){70.00}}
\end{picture}}
}
\title{On the origin of higher braces and higher-order derivations}
\author{Martin Markl}
\thanks{The author was supported by the Eduard \v Cech
  Institute P201/12/G028 and RVO: 67985840.}
\keywords{Koszul braces, \Boj\ braces, higher-order derivation}
\subjclass[2000]{13D99, 55S20}
\address{Mathematical Institute of the Academy, {\v Z}itn{\'a} 25,
         115 67 Prague 1, The Czech Republic}
\address{MFF UK, 186 75 Sokolovsk\'a 83, Prague 8, The Czech Republic}
\email{markl@math.cas.cz}
\begin{document}
\bibliographystyle{plain}

%\linenumbers

\begin{abstract}
In {\bf Part~I\/} we show that the classical Koszul
braces~\cite{Koszul}, as well as their non-commutative counterparts
constructed recently in \Boj's~\cite{Boj}, are the twistings of the
trivial \Linfty- (resp.~\Ainfty-) algebra by a specific
automorphism. This gives an astonishingly simple proof of their
properties.  Using the twisting, we construct other surprising
examples of \Ainfty- and \Linfty-braces. We finish Part~1 by
discussing \Cinfty-braces related to Lie~algebras.

In {\bf Part~2\/} we prove that in fact {\em all\/} natural braces are
the twistings by {\em unique\/} automorphisms. We also show that there is
precisely one hierarchy of braces that leads to a~sensible notion of
higher-order derivations. Thus, the notion of higher-order derivations
is independent of human choices. The results of the second part follow
from the acyclicity of a certain space of natural operations.
\end{abstract}

\maketitle

\tableofcontents
\baselineskip 16pt plus 1pt minus 1 pt
\noindent 
{\bf Advice for the reader.}
The article can be read in four possible ways:
\begin{enumerate}
\item 
as a tool for checking that the classical braces indeed
    form \Linfty- resp.~\Ainfty-algebras,

\item
as a machine producing explicit examples of \Linfty-,
\Ainfty- and possibly also other types of strongly homotopy algebras,

\item
as a justification that the higher-order derivations are
    God-given, not human, inventions existing since the beginning of time, or

\item
as a vanilla version of~\cite{batanin-markl}.
\end{enumerate}

The reader wanting only (1) and (2) may read Part~1 and skip the
rest. Item~(3) explains why higher-order derivations of commutative
associative algebras appear e.g.~in the interpretation of the algebraic
structure of the combined conformal field theory of matter and ghosts
given in~\cite{markl:la}. It is natural to expect that higher-order
derivations of associative algebras based on \Boj's braces would play
a similar r\^ole for open strings.

\noindent 
{\bf Plan of the paper.}
{\it Section \ref{sec:exampl-place-intr}\/} contains several
examples of braces, including the classical Koszul \Linfty-hierarchy and \Boj's
\Ainfty-braces. We demonstrate various properties which
the braces may posses, in particular those leading to a sensible
definition of higher-order derivations. 

In {\em Section \ref{sec:constr-high-brack}\/} we show how to generate
braces by the twisting and interpret all
examples in Section \ref{sec:exampl-place-intr} as emerging this
way. This offers a very simple verification that
they indeed form \Linfty- resp.~\Ainfty-structures. We close this
section by discussing possible
generalizations to Lie and other types of algebras.

In {\em Section \ref{sec:naturality}\/} we analyze natural operations
and prove that they form an acyclic space. The main results are
Propositions~\ref{pojedu_Jarce_zalit_kyticky}
and~\ref{Jarka_je_na_chalupe_s_M1}. The material of this section is a
baby version of the analysis of the Hochschild cochains in connection to
Deligne's conjecture as given in~\cite{batanin-markl}.

{\em Section~\ref{sec:main-results-5}\/} formulates the consequences
of Section \ref{sec:naturality}.
Theorems~\ref{sec:main-results} and~\ref{sec:main-results_bis} state
that all natural braces are the twistings by unique automorphisms, Corollaries
\ref{sec:main-results-6} and~\ref{sec:main-results-6_bis} then
describe the moduli space of all natural braces.   
By Theorems \ref{sec:main-results-1} and \ref{sec:main-results-1_bis},
\Boj's resp.~Koszul braces are the unique ones leading to a
meaningful notion of higher-order derivations of associative
resp.~commutative associative algebras.

\noindent 
{\bf Conventions.}
If not stated otherwise, all algebraic objects will be considered
over a fixed field $\bfk$ of characteristic zero.  
The symbol $\otimes$ will denote the
tensor product over $\bfk$ and $\Span(S)$ the $\bfk$-vector space
spanned by a set $S$. We will
denote by $\id_X$ or simply by $\id$ when $X$ is understood, the identity
endomorphism of an object $X$ (set, vector space, algebra, \&c.).
We will usually write the product of elements $a$ and $b$ of an
associative algebra as $a\cdot b$ or
simply as $ab$.

A degree of a graded object will be denoted by $|w|$ though we will
sometimes omit the vertical bars and write e.g.~$(-1)^{a+b}$ instead 
of~$(-1)^{|a|+|b|}$ to save the space. 
For a permutation $\sigma\in\Sigma_k$ and graded variables $\Rada
  w1k$, the
\emph{Koszul sign} $\varepsilon(\sigma;\Rada   w1k) 
\in \{-1,+1\}$ is defined by the equation
\[
w_1\wedge\ldots\wedge
w_k=  \varepsilon(\sigma;\Rada   w1k) \cdot  w_{\sigma(1)}\wedge\ldots\wedge
w_{\sigma(k)}
\]
which has to be satisfied in the free graded commutative associative
algebra ${\mathbb S}(\Rada w1k)$ generated by $\Rada w1k$. We usually
write  $\varepsilon(\sigma)$ instead of
$\varepsilon(\sigma;\Rada   w1k) $ when the meaning of $\Rada   w1k$
is clear from the context.

Given integers $a,b \geq 0$, an $(a,b)$-{\em unshuffle\/} is a 
permutation $\sigma \in \Sigma_{a+b}$ satisfying
\[
\mbox {
$\sigma(1)<\cdots<\sigma(a)$ \ and \ $\sigma(a+1)< \cdots < \sigma(a+b)$.}
\]
By {\em braces\/} we mean the structure operations of a
strongly homotopy algebra.

\noindent 
{\bf Acknowledgment.} I would like to express my thanks to Maria
Ronco for Remark~\ref{Ceka_mne_cesta_proti_strasnemu_vetru}.

\part{Examples and constructions}

%\section{Introduction and main results}
\section{Examples in place of introduction}
\label{sec:exampl-place-intr}

We start by recalling a construction attributed to
Koszul~\cite{Koszul} and sometimes referred to as the Koszul hierarchy, see
also~\cite{akman:preprint97,akman-ionescu,bering-damgaard-alfaro:preprint,bering:CMP07,voronov03:higher}. It is used to define
higher order derivations of commutative associative algebras, see
\S\ref{sec:high-order-deriv} below; they play a
substantial r\^ole for instance in the BRST approach to closed string
field theory~\cite[Section~4]{markl:la}.

\begin{example}[Classical \Linfty-braces]
\label{dnes_vecer_s_Jarkou_a_Tondou_u_Pakousu} 
Let $A$ be a graded commutative associative algebra with a degree
$+1$ differential $\Delta$ which is, very crucially, {\em not necessarily\/} a
derivation. {\em Koszul braces\/} are linear degree $+1$ maps
$\Phi_k^\Delta : \otexp Ak \to A$, $k \geq 1$, defined
by the formulas
\begin{align*}
\Phi^\Delta_1(a) &= \Delta(a),
\\
\Phi^\Delta_2(a_1,a_2) &= \Delta(a_1a_2)
 - \Delta(a_1)a_2 - \sign{\nic a_1\nic }a_1\Delta(a_2),  
\\
\Phi^\Delta_3(a_1,a_2,a_3) &=\Delta(a_1a_2a_3)
-\Delta(a_1a_2)a_3 
- \sign{\nic a_1\nic (\nic a_2\nic  + \nic a_3\nic)}  \Delta(a_2a_3)a_1 
-\sign{\nic a_3\nic (\nic a_1\nic  + \nic a_2\nic )}\Delta(a_3a_1)a_2 
\\
&\hphantom{=} \hskip .5em + \Delta(a_1)a_2a_3+ 
\sign{\nic a_1\nic (\nic a_2\nic  
+ \nic a_3\nic )}\Delta(a_2)a_3a_1+\sign{\nic a_3\nic (\nic a_1\nic  +
  \nic a_2\nic )} \Delta(a_3)a_1a_2,
\\
& \hskip .7em \vdots    
\\
\Phi^\Delta_k(\Rada a1k) &= 
\sum_{1 \leq i \leq k} \sign{k-i} \sum_\sigma \varepsilon(\sigma)\Delta(a_{\sigma(1)}
  \cdots a_{\sigma(i)})a_{\sigma(i+1)} \cdots a_{\sigma(k)},
\end{align*}
for $a,a_1,a_2,a_3, \ldots\in A$.  The summation in the last line runs
over all $(i,k-i)$-unshuffles $\sigma$ and $\varepsilon(\sigma) =
\varepsilon(\sigma;\Rada a1k)$ is the Koszul sign. As proved for
instance in~\cite{bering-damgaard-alfaro:preprint}, these braces form an
\Linfty-algebra\footnote{We will give a short and elegant proof of
this fact in Example~\ref{Dojde_zitra_Jaruska_na_parnik?};
\Linfty-algebras are recalled in
Definition~\ref{sec:ainfty-algebras-bis}.} 
and have
moreover the property that
\begin{equation}
\label{eq:10bis}
\mbox {
if $\Phi^\Delta_k = 0$  identically on $\otexp Ak$ then $\Phi^\Delta_{k+1} = 0$
identically on $\otexp A{k+1}$.}
\end{equation}
We call braces with this property {\em hereditary\/}.
\end{example}

\subsection{Higher order derivations}
\label{sec:high-order-deriv}

Let $A$ be a graded commutative associative algebra with a
differential $\Delta$ as in
Example~\ref{dnes_vecer_s_Jarkou_a_Tondou_u_Pakousu}.  
One says \cite{akman-ionescu} that $\Delta$ is an {\em order $r$
derivation\/} if $\Phi^\Delta_{r+1} = 0$. Clearly, being an order $1$
derivation is the same as being a derivation in the usual sense. 
It is almost clear that an order $r$-derivation is determined by its
values on the products $x_1 \cdots x_s$, $s \leq r$, of generators of
$A$; an explicit formula is given in~\cite[Proposition~3.4]{markl:la}.

One may ask whether higher-order derivations are `God-given,'
i.e.~whether the braces that define it are unique.  Let us try to
find out which properties the braces leading to a sensible notion of
higher-order derivations should satisfy.  First of all, they must be
`natural' in that they use only the data that are
available for any graded associative commutative algebra with a
differential. The exact meaning of naturality is analyzed in
Section~\ref{sec:naturality}.

Given \Linfty-braces
$(A,\Delta,l^\Delta_2,l^\Delta_3,\ldots)$, we may call $\Delta$ an order $r$
$l$-derivation if and only if $l^\Delta_{r+1} = 0$. 
It is clear from the axioms for \Linfty-algebras recalled in
\S\ref{sec:comm-algebr-linfty} that, for arbitrary scalars $\alpha,\beta \in
\bfk$, the object
\begin{equation}
\label{eq:10}
(A,\alpha\Delta,\alpha\beta\, l^\Delta_2,\alpha\beta^2\, l^\Delta_3,
\alpha\beta^3\, l^\Delta_4,\ldots)
\end{equation} 
is an \Linfty-algebra as well.  The first property we want is that an
order $1$ $l$-derivation is an ordinary derivation.  This means that,
after a suitable renormalization~(\ref{eq:10}),
\begin{subequations}
\begin{equation}
\label{eq:12}
l^\Delta_2(a_1,a_2) = \Delta(a_1a_2)
 - \Delta(a_1)a_2 - \sign{|a_1|}a_1\Delta(a_2), \ \mbox { for each } a_1,a_2 \in A.
\end{equation}
We also certainly want that an order  $r$
$l$-derivation is also an order $r+1$
$l$-derivation, that~is:
\begin{equation}
\label{eq:15}
\mbox {the braces $l^\Delta_2,l^\Delta_3,l^\Delta_4, \ldots$ are hereditary.} 
\end{equation}
The following example however shows that
conditions~(\ref{eq:12})--(\ref{eq:15}) still do not determine the
braces uniquely.

\begin{example}[Hereditary exotic \Linfty-braces]
\label{Preletim_Vivata?}
Here $A$ is a graded commutative associative algebra with a degree
$+1$ differential $\Delta$ as in
Example~\ref{dnes_vecer_s_Jarkou_a_Tondou_u_Pakousu}. 
{}For $a,a_1,a_2,a_3,\ldots \in A$ define
\begin{align*}
h^\Delta_1(a) &= \Delta(a),
\\
h^\Delta_2(a_1,a_2) &= \Delta(a_1a_2)
 - \Delta(a_1)a_2 - \sign{\nic a_1\nic }a_1\Delta(a_2),  
\\
h^\Delta_2(a_1,a_2,a_3) &= 2\Delta(a_1)a_2a_3 +
  \sign{\nic a_1\nic (\nic a_2\nic  + \nic a_3\nic )} 2 \Delta(a_2)a_3a_1 
+ \sign{\nic a_3\nic (\nic a_1\nic  + \nic a_2\nic )}2 \Delta(a_3)a_1a_2
\\
&\hphantom{=} \hskip .5em - \Delta(a_1a_2)a_3   
- \sign{\nic a_1\nic (\nic a_2\nic  + \nic a_3\nic )}\Delta(a_2a_3)a_1
- \sign{\nic a_3\nic (\nic a_1\nic  + \nic a_2\nic )}\Delta(a_3a_1)a_2 , 
\\
&\hskip .7em \vdots
\\    
h^\Delta_k(\Rada a1k) &= \zn{k+1} (k\!-\!1)! \sum_\tau  \varepsilon(\tau)  
\Delta(a_{\tau(1)})a_{\tau(2)} \cdots a_{\tau(k)}
\\
&\hskip 3.7em + \zn k
(k\!-\!2)! \sum_\sigma \varepsilon(\sigma)  
\Delta(a_{\sigma(1)}a_{\sigma(2)})a_{\sigma(3)} \cdots a_{\sigma(k)}
\end{align*}
where $\tau$ runs over all $(1,k\!-\!1)$-unshuffles 
and $\sigma$ over all $(2,k\!-\!2)$-unshuffles. 
It is easy to verify that the above braces satisfy the induction
\[
h^\Delta_{k+1}(\Rada a1{k+1}) = - \sum_{\sigma} \varepsilon(\sigma) 
h^\Delta_k(\Rada a1{\sigma(k)})  a_{\sigma(k+1)}
\]
with $\sigma$ running over all $(k,1)$-unshuffles.
This implies that they are hereditary.
\end{example}

Next, we want the {\em recursivity\/} of higher-order derivations, 
by which we mean that an order $r$
$l$-derivation is determined by its values on the products
of $\leq r$ generators. Moreover, the notion of higher-order
derivations and therefore the braces as well must be defined over an
arbitrary ring. The recursivity is thus equivalent to:
\begin{equation}
\label{eq:17}
\begin{array}{cc}
\mbox {The braces are defined over the ring ${\mathbb Z}$ of integers
and the coefficient $C_k$}
\\ \mbox{at the term $\Delta(a_1\cdots a_k)$
\rule{0em}{1.1em}in $l^\Delta_k(a_1,\ldots, a_k)$ 
is either $+ 1$ or $-1$ for any $k \geq 1$.}
\end{array}
\end{equation}
\end{subequations}
For, if $p := C_k \not\in \{-1,1\}$ for some $k$, then higher-order
$l$-derivations will not be recursive over the ring ${\mathbb
Z}/p{\mathbb Z}$ of integers modulo $p$. For the braces in
Example~\ref{Preletim_Vivata?}, $C_k = 0$ for all $k \geq 3$, so they
do not satisfy~(\ref{eq:17}).

It will follow from Theorem~\ref{sec:main-results-1_bis} that
assumptions~(\ref{eq:12})--(\ref{eq:17}) already imply that $l^\Delta_k =
\Phi^\Delta_k$ for each $k \geq 1$. Let us
start our discussion of the non-commutative
case by recalling one construction from a recent preprint~\cite{Boj} of \Boj.

\begin{example}[B\"orjeson's \Ainfty-braces] 
\label{Jarka_mi_vcera_volala}
Given a graded associative (not necessarily commutative) algebra $A$ with a
derivation $\Delta$, define for $a,a_1,a_2,a_3,\ldots \in A$,
\begin{align*}
b^\Delta_1(a) &= \Delta(a),
\\
b^\Delta_2(a_1,a_2) &= \Delta(a_1a_2) - \Delta(a_1)a_2 - \zn {a_1} 
a_1\Delta(a_2),  
\\
b^\Delta_3(a_1,a_2,a_3) &=\Delta(a_1a_2a_3) -\Delta(a_1a_2)a_3 -
\zn {a_1}  a_1\Delta(a_2a_3)
+\zn {a_1}  a_1\Delta(a_2)a_3,
\\
b^\Delta_4(a_1,a_2,a_3,a_4) &=\Delta(a_1a_2a_3a_4)\! - \! \Delta(a_1a_2a_3)a_4
\!-\!\zn {a_1}  a_1\Delta(a_2a_3a_4)\! +\!\zn {a_1}  a_1\Delta(a_2a_3)a_4,
\\
&\hskip .6em \vdots
\\
b^\Delta_k(\Rada a1k) &= 
\Delta(a_1\cdots a_k) - \Delta(a_1\cdots a_{k-1})a_k 
\\
&\hphantom{=} \hskip .5em-
\zn{a_1}a_1\Delta(a_2\cdots a_{k}) + \zn{a_1}a_1 \Delta(a_2\cdots a_{k-1})a_k.
\end{align*}
As proved in~\cite{Boj}, these braces form an
\Ainfty-algebra\footnote{A simple proof of this fact is provided by
Example~\ref{Jaruska_mozna_prijede_ve_ctvrtek_do_Prahy};
\Ainfty-algebras are recalled in Definition~\ref{sec:ainfty-algebras}.} and are hereditary.
\end{example}

It is obvious that \Boj's braces satisfy
assumptions~(\ref{eq:12})--(\ref{eq:17}), so they lead to a sensible
notion of higher-order derivations of graded associative
(non-commutative) algebras. By Theorem~\ref{sec:main-results-1}, they
are the only \Ainfty-braces with these properties.

\begin{example}[Non-recursive \Ainfty-braces]
\label{Pisu_v_Koline-vcera-jsem-prijel-na-kole!}
\label{koupili-jsme-si-pomucku}
The braces below lead to recursive
higher-order derivations over ${\mathbb Z}$ but not over
${\mathbb Z}/5{\mathbb Z}$, the integers modulo $5$.
They are, up to the obvious Koszul signs, given by
\begin{align*}
e^\Delta_1(a) &= \Delta(a),
\\
e^\Delta_2(a_1,a_2)&= \Delta(a_1a_2) - \Delta(a_1)a_2 - a_1\Delta(a_2) 
\\
e^\Delta_3(a_1,a_2,a_3) &= 2\Delta(a_1a_2a_3) - \Delta(a_1)a_2a_3 - a_1a_2\Delta(a_3) - \Delta(a_1a_2)a_3 -
a_1\Delta(a_2a_3),
\\
e^\Delta_4(a_1,a_2,a_3,a_4) &= 5 \Delta(a_1a_2a_3a_4) -
\Delta(a_1a_2)a_3a_4 - a_1a_2\Delta(a_3a_4) 
\\
& \ \ \ \  - 2\big(\Delta(a_1)a_2a_3a_4 + a_1a_2a_3\Delta(a_4) +
\Delta(a_1a_2a_3)a_4 + a_1\Delta(a_2a_3a_4)\big)
\\
&\ \ \vdots
\\
e^\Delta_k(\Rada a1k) &=
\alpha_k \Delta(a_1\cdots a_k)
\\
& \ \ \ \ - \sum_{1\leq u \leq k-1} \alpha_u \alpha_{k-u}
\big(\Delta(a_1\cdots a_u) a_{u+1} \cdots a_k + a_1 \cdots a_u
\Delta(a_{u+1} \cdots a_k)  \big),
\end{align*}
where
\begin{equation}
\label{Jaruska_ma_zitra_krasne_54te_narozeniny!}
\alpha_1 := 1 \ \mbox { and } \ 
\alpha_k := \frac 1{k-1} \binom {2k-2}k \ \mbox { for }  \ k \geq 2.
\end{equation}  
\end{example}

\begin{example}[Hereditary non-recursive \Ainfty-braces]
\label{pojedu-proti-vetru}
We define braces
satisfying~(\ref{eq:12}),~(\ref{eq:15}) but not~(\ref{eq:17}).
Namely, for elements $a,a_1,a_2,a_3,\ldots$ of a graded associative algebra $A$ with a differential
$\Delta$ we put
\begin{align*}
e^\Delta_1(a) &= \Delta(a),
\\
e^\Delta_2(a_1,a_2) &= \Delta(a_1a_2) - \Delta(a_1)a_2 - a_1\Delta(a_2),  
\\
e^\Delta_3(a_1,a_2,a_3) &= -\Delta(a_1a_2)a_3 - a_1\Delta(a_2a_3)
+\Delta(a_1)a_2a_3 + 2a_1\Delta(a_2)a_3 + a_1a_2\Delta(a_3), 
\\
e^\Delta_4(a_1,a_2,a_3,a_4) &= \Delta(a_1a_2)a_3a_4 +2a_1\Delta(a_2a_3)a_4 +
a_1a_2\Delta(a_3a_4)
\\
&\hphantom{=} -
\Delta(a_1)a_2a_3a_4  -3a_1\Delta(a_2)a_3a_4 -3a_1a_2\Delta(a_3)a_4
-a_1a_2a_3\Delta(a_4),
\\
&\hphantom{=}\vdots    
\\
e^\Delta_k(\Rada a1k) &= (-1)^k
\sum_{0 \leq i \leq k -2} {\textstyle\binom {k-2}i} \  a_1 \cdots
\Delta(a_{i+1}a_{i+2}) \cdots a_k
\\
&\hskip 5em - (-1)^k\sum_{0 \leq i \leq k -1} {\textstyle\binom {k-1}i} \
 a_1 \cdots
\Delta(a_{i+1}) \cdots a_k.
\end{align*}
We omitted for clarity the obvious Koszul signs.
It is easy to verify the inductive formula
\[
e^\Delta_{k+1}(\Rada a1{k+1}) = 
-\zn {a_1} a_1e^\Delta_k(\Rada a2{k+1}) - e^\Delta_k(\Rada
a1k)a_{k+1}, \ k \geq 1,  
\]
which implies that they are hereditary. On the other hand,
$e^\Delta_k(\Rada a1k)$ does not contain the term $\Delta(a_1\cdots a_k)$,
so the coefficients $C_k$ in~(\ref{eq:17}) are $0$ for all $k \geq
2$.    
\end{example}

Hereditarity is a very fine property; `randomly chosen' braces will
not be hereditary. A systematic method of producing non-hereditary
braces, based surprisingly on a rather deep
Proposition~\ref{sec:main-results-3}, is described in
Example~\ref{zitra_prijede_Ronco}.

\section{Constructions of higher braces}
\label{sec:constr-high-brack}

\subsection{Non-commutative algebras and \Ainfty-braces}
\label{zase_mi_vynechavalo_srdce}
Recall that an \Ainfty-algebra consist of a graded vector space $V$
together with linear operations $\mu_k : \otexp Vk \to V$, $k \geq 1$, such that
$\deg(\mu_k) = 2-k$, satisfying a system of axioms that say that
$\mu_1$ is a differential, $\mu_2$ is associative up to the homotopy
$\mu_3$, \&c, see e.g. \cite{stasheff:TAMS63}.

It will be useful in the context of this paper to transfer the
operations $\mu_k : \otexp Vk \to V$ to the desuspension $A :=\
\downarrow\! V$, i.e.~to define new operations $m_k : \otexp Ak \to A$
by the commutativity of the diagram
\[
\xymatrix{
\otexp Ak \ar[r]^{m_k}  &  A
\\
\otexp Vk\ar[u]^{\otexp \downarrow k} \ar[r]^{\mu_k} &\  V, \ar[u]_{\downarrow}
}
\]
where $\downarrow : V \to\ \downarrow\! V = A$ is the desuspension
map. All $m_k$'s then are of degree $+1$ and they satisfy the axioms
\begin{equation}
\label{eq:8}
\sum_{k+l = n+1}\sum_{1\leq i \leq k}
m_k(\id_A^{\ot {i-1}} \ot m_l \ot \id_A^{\ot {k-i}}) = 0, \ \mbox { for
  each $n \geq 1$}.
\end{equation}
We will use this version of \Ainfty-algebras throughout the paper:

\begin{definition}
\label{sec:ainfty-algebras}
An {\em \Ainfty-algebra\/} is a structure $\calA =
(A,m_1,m_2,m_3,\ldots)$ consisting of a graded vector space $A$ and
degree $+1$ linear maps $m_k : \otexp Ak\to A$, $k \geq 1$,
satisfying~(\ref{eq:8}). 
\end{definition}

Let $\Tc A$ be the coalgebra whose underlying space is the tensor
algebra $\Ten A := \bigoplus_{n \geq 1} \otexp An$ and the diagonal
(comultiplication) is the de-concatenation.  It turns out that
$\Tc A$ is a cofree {\em conilpotent\/} coassociative coalgebra
cogenerated by $A$, see e.g.~\cite[\S
II.3.7]{markl-shnider-stasheff:book}.\footnote{A general misconception
is that $\Tc A$ is cofree in the category of {\em all\/} coassociative
coalgebras.}  Its cofreeness implies that each coderivation $\vartheta$
of $\Tc A$ is given by its components $\vartheta_k : \otexp Ak \to A$, $k \geq 1$,
defined by $\vartheta_k := \pi \circ
\vartheta \circ \iota_k$, where $\pi: \Tc A \epi A$ is the
projection and $\iota_k : \otexp Ak \hookrightarrow \Tc A$ the
inclusion. We write $\vartheta =
(\vartheta_1,\vartheta_2,\vartheta_3,\ldots)$.

Let $m := (m_1,m_2,m_3,\ldots)$ be a degree $1$ coderivation of $\Tc
A$ determined by the linear maps $m_k$ as in Definition
\ref{sec:ainfty-algebras}. It is well-known that
axiom~(\ref{eq:8}) is equivalent to $m$ being a differential,
i.e.~to a single equation $m^2 = 0$.  Therefore equivalently,
an \Ainfty-algebra is a pair $(A,m)$ consisting of a graded vector space
$A$ and a degree $+1$ coderivation $m$ of $\Tc A$ which squares to zero.

In homological algebra one usually considers \Ainfty-algebras
$(A,m_1,m_2,m_3,\ldots)$ as objects living in the category of
differential graded (dg) vector spaces, the linear operation
(differential) $m_1$ being part of its underlying dg-vector space, not a
structure operation. For this reason we call an \Ainfty-algebra
with $m_k = 0$ for $k\geq 2$ a {\em trivial\/} \Ainfty-algebra.

\begin{example}[Trivial \Ainfty-algebra]
\label{Vcera_mi_Jarunka_volala_kdyz_jsem_byl_v_Libni}
Let $\Delta: A \to A$ be a degree $+1$ differential on a graded vector space
$A$. It is clear that $\calA_\Delta := (A,\Delta,0,0,\ldots)$ 
is an \Ainfty-algebra. The differential $\Delta$ extends to a linear
coderivation $(\Delta,0,0,\ldots)$ of $\Tc A$.
\end{example}

As coderivations, by the universal property of $\Tc A$ each
endomorphism $\phi : \Tc A \to \Tc A$ is determined by its components
$\phi_k : \otexp Ak \to A$, $k \geq 1$, defined by $\phi_k := \pi
\circ \phi \circ \iota_n$,  We will
write $\phi = (\phi_1,\phi_2,\phi_3,\ldots)$. 
The sequence
$(\id_A,0,0,\ldots)$ represents the identity automorphism. The composition
$\psi\phi$ of $\phi$ with another endomorphism $\psi =
(\psi_1,\psi_2,\psi_3,\ldots)$  has components
\begin{equation}
\label{opet_jsem_podlehl}
(\psi \phi)_k = \sum_{r \geq 1} \sum_{i_1 + \cdots + i_r = k} 
\psi_r(\phi_{i_1} \ot \cdots \ot \phi_{i_r}).
\end{equation}
It is well-known that $\phi : \Tc A \to \Tc A$ is an automorphism
(i.e.~invertible endomorphism) if
and only if $\phi_1 : A \to A$ is invertible. We call $\phi$ {\em
linear\/} if $\phi_k= 0$ for $k \geq 2$. Let us recall

\begin{Definition}
Two \Ainfty-algebras $\calA' = (A',m')$ and $\calA'' = (A',m'')$ are
{\em isomorphic\/} if there exists an automorphism $\phi : \Tc A' \to
\Tc A''$ such that $\phi m' = m'' \phi$.\footnote{Sometimes one says
that $\calA'$ and $\calA''$ are {\em weakly\/} isomorphic.} They are
{\em strictly\/} isomorphic if there exist a {\em linear\/} $\phi$ as
above.
\end{Definition}

Assume that we are given an \Ainfty-algebra $\calA = (A,m)$ and an 
automorphism $\phi : \Tc A \to \Tc A$. Then clearly $(A,\phi^{-1} m
\phi)$ is an \Ainfty-algebra isomorphic to $\calA$.

\begin{definition}
\label{V_nedeli_se_Jarunka_vrati}
In the situation above, we denote $m^\phi :=\phi^{-1} m \phi$
and call the \Ainfty-algebra $\calA^\phi :=
(A,m^\phi)$  the {\em
twisting\/} of the \Ainfty-algebra $\calA = (A,m)$ by the automorphism $\phi$.
\end{definition}

The components of the twisted coderivation $m^\phi := \phi^{-1} m
\phi$ can be expressed explicitly as 
\[
m^\phi_k =
\sum_{r,u \geq 1}\sum_{1\leq j \leq r}\sum_{i_1 + \cdots + i_{r+u-1} = k} 
(\phi^{-1})_r(\otexp {\id}{j-1} \ot m_u \ot \otexp {\id}{r-j})
(\phi_{i_1} \ot \cdots \ot \phi_{i_{r+u-1}}).
\]
If $m$ is the linear coderivation $\Delta$ as in Example
\ref{Vcera_mi_Jarunka_volala_kdyz_jsem_byl_v_Libni}, the above formula
simplifies~to
\begin{equation}
\label{musim_napsat_zapis}
\Delta^\phi_k =
\sum_{r \geq 1}\sum_{1\leq j \leq r}\sum_{i_1 + \cdots + i_{r} = k} 
(\phi^{-1})_r(\otexp {\id}{j-1} \ot \Delta \ot \otexp {\id}{r-j})
(\phi_{i_1} \ot \cdots \ot \phi_{i_{r}}).
\end{equation}

\subsection{Explicit formulas}

Let $A$ be graded associative algebra with the product $\mu :
\otexp A2 \to A$. 
Denote by $\uAut$ the group of automorphisms $\phi$ of $\Tc(A)$
of the form 
\[
\phi := (\id_A,f_2\, \mu_2,f_3\, \mu_3,f_4\,\mu_4,\ldots), 
\]
where $\mu_k: \otexp Ak \to A$ is the multiplication $\mu$ iterated
$(k-1)$-times, and $f_k \in \bfk$ are scalars, $k \geq 2$.\footnote{As
  explained in Example~\ref{snad-se-mi-neotoci-vitr}, $\uAut$ is a
  sub-monoid of the monoid of all natural automorphisms.}
Such an
automorphism is clearly determined by its {\em generating series\/} 
\begin{equation}
\label{mam-roztrhane_trenky}
\phi(t) := t + f_2t^2 + f_3 t^3 +\cdots \in \bfk [[t]].
\end{equation}
It is easy to verify using~(\ref{opet_jsem_podlehl}) that the
composition of automorphisms is translated into the composition of their
generating series, i.e.~$(\psi\phi)(t) = \psi\big(\phi(t)\big)$.

Let $\phi\in \uAut$ be the automorphism with the generating
series~(\ref{mam-roztrhane_trenky}) and $\psi(t)
:= \phi^{-1}(t)$ its inverse with the generating series
\[
\psi(t) = t + g_2t^2 + g_3 t^3
+\cdots \in \bfk [[t]].
\]
Denote by
\[
\psi'(t) := 1 + g_2t + g_3 
t^2 + g_4t^3\cdots \in \bfk [[t]]
\]
the \underline{non}commutative derivative of $\psi(t)$. 
It is straightforward to verify  
that~(\ref{musim_napsat_zapis}) gives
\begin{equation}
\label{v_pondeli_mne_snad_Jaruska_zachrani}
\Delta^\phi_k = \sum_{r+s+p = k}
c_{r,s} \cdot f_p \cdot
\mu_{r+s+1}(\otexp \id r \ot \Delta \mu_p \ot \otexp \id s),
\end{equation}
with the coefficients  $c_{r,s} \in \bfk$ defined as
\begin{equation}
\label{lednim_medvidkum_je_vedrobis}
c_{r,s} := \left. \psi'\big(\phi(u) + \phi(v)\big)
\right|_{u^rv^s} ,
\end{equation}
where  $\big |_{u^rv^s}$ denotes the coefficient at
$u^rv^s$ of the corresponding power series in the ring of {
noncommutative\/} polynomials in $u$ and $v$, i.e.~the
{noncommutative\/} Taylor coefficient at $u^rv^s$. 
Explicitly,
\begin{equation}
\label{dnes_ma_byt_40}
c_{r,s} = \ \sum_{k,l \geq 0} g_{k+l+1} \sum_{a_1+\cdots+a_k = r} f_{a_1}
\cdots f_{a_k} \sum_{b_1+\cdots+b_l = s} f_{b_1} \cdots f_{b_l},
\end{equation}
where we put, by definition, $g_1=f_1 := 1$.
Observe that the above sum makes sense even for $r$ or $s$ equaling
$0$ provided we interpret the empty product as $1$.

\begin{Exercise}
If $\Delta$ is a derivation, then $\Delta^\phi = \Delta$, i.e.~$\Delta_k = 0$
in~(\ref{v_pondeli_mne_snad_Jaruska_zachrani}) for all $k \geq 2$.
\end{Exercise}

\begin{example}[B\"orjeson's \Ainfty-braces continued] 
\label{Jaruska_mozna_prijede_ve_ctvrtek_do_Prahy}
We describe the braces constructed in~\cite{Boj}
and recalled in Example~\ref{Jarka_mi_vcera_volala} as a
twisting of the trivial \Ainfty-algebra $\calA_\Delta = 
(A,\Delta,0,0,\ldots)$. We take as
$\phi$ the automorphism with the generating series 
\[
\phi(t) := t + t^2 + t^3 + \cdots = \frac t{1-t},
\]
so that
\[
\psi(t) := \phi^{-1}(t) = t - t^2 + t^3 - \cdots = \frac t{1+t}.
\]
In this case, $g_{k+l+1}$ in~(\ref{dnes_ma_byt_40}) equals $(-1)^{k+l}$,
therefore
\begin{align*}
c_{r,s}
&= \sum_{a_1+\cdots+a_k = r} (-1)^k{f_1} \cdots  f_{a_k}
\sum_{b_1+\cdots+b_l = s} (-1)^l f_{b_1} \cdots  f_{b_l}
\\
&=\left.\big(1 - \psi\phi(u)\big)\big(1-\psi\phi(v)\big)
\right|_{u^rv^s}
= \left.(1-u)(1-v) \right|_{u^rv^s}.
\end{align*}
We conclude that
\[
c_{r,s} = \tricases {1}{if $(r,s) \in  \big\{(0,0),(1,1)\big\}$,}
{-1}{if $(r,s) \in  \big\{(0,1),(1,0)\big\}$, and}
0{in the remaining cases.}
\]
Since $f_p =1$, formula~(\ref{v_pondeli_mne_snad_Jaruska_zachrani}) 
obviously gives \Boj' braces, i.e.~$b_k^\Delta =  \Delta^\phi_k$ for
each $k \geq 1$.
\end{example}

\begin{example}[Non-recursive \Ainfty-braces continued]
\label{sec:explicit-formulas}
The braces described in
Example~\ref{Pisu_v_Koline-vcera-jsem-prijel-na-kole!} are the result
of the twisting by the endomorphism $\phi$ with the generating function
\[
\phi(t) := \frac{1-\sqrt{1-4t}}2 = 
t + \sum_{k \geq 2}  \frac {t^k}{k-1} \binom {2k-2}k =
t+t^2+2t^3+5t^4+
14t^5 +\cdots, 
\]
whose inverse equals $\psi(t) = t-t^2$.
Observe that $\psi'(t) = 1-t$, therefore,
in~(\ref{lednim_medvidkum_je_vedrobis})
\[
c_{r,s} = \left. \big(1 - \phi(u) - \phi(v)\big)
\right|_{u^rv^s}
=
\tricases{-(\alpha_r+\alpha_s)}{if $r=0$ or $s=0$ but $(r,s) \not= (0,0)$,}
1{if $(r,s) = (0,0)$, and}0{in the remaining cases,}
\]
where $\alpha_i$'s are the Taylor coefficients of $\phi(t)$. It is simple to
verify that formula~(\ref{v_pondeli_mne_snad_Jaruska_zachrani}) leads to
the braces of Example~\ref{Pisu_v_Koline-vcera-jsem-prijel-na-kole!}.
\end{example}

\begin{Example}[Hereditary non-recursive \Ainfty-braces continued]
The braces in Example~\ref{pojedu-proti-vetru} are generated by the
automorphism with the generating series $\phi(t) = t+ t^2$ whose
inverse is
\[
\psi(t) := \frac{\sqrt{1+4t}-1}2 = 
t - \sum_{k \geq 2}  \frac {(-t)^k}{k-1} \binom {2k-2}k =
t-t^2+2t^3-5t^4+14t^5 - \cdots.
\]
We leave as an exercise to perform the calculation. 
Notice that $\phi(t)$ and $\psi(t)$ are related with those from
Example~\ref{sec:explicit-formulas} via the transformation
\[
\phi(t) \mapsto -\psi(-t),\ \psi(t) \mapsto -\phi(-t).
\]
\end{Example}

\subsection{Commutative algebras and \Linfty-braces}
\label{sec:comm-algebr-linfty}

Lie counterparts of \Ainfty-algebras are \Linfty-algebras.  An
\Linfty-algebra is a graded vector space $L$ with linear operations
$\ell_k : \otexp Lk \to V$, $k \geq 1$, $\deg(\ell_k) = 2-k$, that are
graded antisymmetric and satisfy axioms that say that $\ell_1$ is a
differential, $\ell_2$ fulfills the Jacobi identity up to the homotopy
$\ell_3$, \&c, see e.g.\
\cite{lada-markl:CommAlg95,lada-stasheff:IJTP93}.

As for \Ainfty-algebras, we will use the version 
transferred  the desuspension $A :=\ \downarrow\! L$. 
The transferred structure operations $l_k$'s have degree 
$+1$, are graded symmetric, and satisfy, for each $\Rada a1n
\in A$, the `master identity'
\begin{equation}
\label{eq:8bis}
\sum_{i+j=n+1}\sum_\sigma
\varepsilon(\sigma)l_j\big(l_i(\Rada a{\sigma(1)}{\sigma(i)}),\Rada
a{\sigma(i+1)}{\sigma (n)}\big)=0,
\end{equation}
where $\sigma$ runs over all $(i,n-i)$-{\em unshuffles\/}
and $\varepsilon(\sigma)$ is the Koszul sign of $\sigma$. 
We thus use:

\begin{definition}
\label{sec:ainfty-algebras-bis}
An {\em \Linfty-algebra\/} is an object $\calL =
(A,l_1,l_2,l_3,\ldots)$ consisting of a graded vector space $A$ and
degree $+1$ graded symmetric linear maps $l_k : \otexp Ak\to A$, $k \geq 1$,
satisfying~(\ref{eq:8bis}) for each $n \geq 1$. 
\end{definition}

Let $\Sc A  = \bigoplus_{k \geq 1} {\mathbb S}^k A$ 
be the symmetric coalgebra with the diagonal given by the
de-concatenation; it is the cofree conilpotent cocommutative
coassociative coalgebra cogenerated by $A$.  Each coderivation 
$\omega$ of $\Sc A$ is thus determined by its components
$\omega_k := \pi \circ \omega \circ \iota_k$, $k \geq 1$, where $\pi:
\Sc A \epi A$ is the projection and $\iota_k : {\mathbb S}^k A
\hookrightarrow \Sc A$ the inclusion of the $k$th symmetric power of
$A$. We write $\omega =
(\omega_1,\omega_2,\omega_3,\ldots)$. 

Let $l := (l_1,l_2,l_3,\ldots)$ be a degree $1$ coderivation of $\Sc
A$ determined by the linear maps $l_k$ of Definition
\ref{sec:ainfty-algebras-bis}.
Axiom~(\ref{eq:8bis}) is equivalent to 
a single equation $l^2 = 0$ \cite[Theorem~2.3]{lada-markl:CommAlg95}.  So an
\Linfty-algebra is a pair $(A,l)$ a graded vector space
and a degree $+1$ coderivation $l$ of $\Sc A$ which squares to zero.

\begin{example}[Trivial \Linfty-algebra]
\label{Vcera_mi_Jarunka_volala_kdyz_jsem_byl_v_Libni-bis}
The observations of
Example~\ref{Vcera_mi_Jarunka_volala_kdyz_jsem_byl_v_Libni} apply
verbatim to the \Linfty-case --
if $\Delta$ is a degree $+1$ differential on a graded vector space
$A$, then $\calL_\Delta := (A,\Delta,0,0,\ldots)$ 
is an \Linfty-algebra.   
\end{example}

As  automorphisms of $\Tc A$ twist \Ainfty-algebras,
\Linfty-algebras can be twisted by automorphisms
$\phi : \Sc A \to \Sc A$ determined by their
components $\phi_k : {\mathbb S}^kA \to A$, $k \geq 1$. We leave as an
exercise to derive formulas for the composition
and for the twisting of $\calL_\Delta$
analogous to~(\ref{opet_jsem_podlehl}) and~(\ref{musim_napsat_zapis}).

\subsection{Explicit formulas}

For a graded associative commutative algebra with a multiplication
$\mu : \otexp A2 \to A$, denote by $\Aut(A)$ the group of
automorphisms $\phi$ of $\Sc(A)$ of the form
\[
\phi := (\id_A,f_2\, \mu_2,f_3\, \mu_3,f_4\,\mu_4,\ldots), 
\]
where $\mu_k: \otexp Ak \to A$ is the multiplication $\mu$ iterated
$(k-1)$-times, and $f_k \in \bfk$ are scalars, $k \geq 2$. To such an
automorphism we associate its {\em generating series\/} 
\begin{equation}
\label{mam-roztrhane_trenkybis}
\phi(t) = 1 + f_1t + \frac{f_2}{2!}t^2 + \frac{f_3}{3!} t^3 +\cdots \in \bfk [[t]].
\end{equation}
It is simple to verify that the generating series of the composition
of two automorphisms is the composition of their
generating series, i.e.~$(\psi\phi)(t) = \psi\big(\phi(t)\big)$.
The situation is analogous to the non-commutative case, only the
generating series involve factorials.

Let $\phi\in \Aut(A)$ be the automorphism with the generating
series~(\ref{mam-roztrhane_trenkybis}) and $\psi(t)
:= \phi^{-1}(t)$ the inverse of its generating series,
\[
\psi(t) = 1 + g_1t + \frac{g_2}{2!}t^2 + \frac{g_3}{3!} t^3
+\cdots \in \bfk [[t]].
\]
Denote by
\[
\psi'(t) := g_1 + g_2t + \frac{g_3}{2!} 
t^2 + \frac{g_4}{3!}t^3\cdots \in \bfk [[t]]
\]
its (ordinary) derivative. 
It is simple to verify  
that the components of the twisting $\Delta^\phi$ of $\Delta$ via $\phi$ are
given by   
\begin{equation}
\label{Musim_napsat_ten_uvodnik}
\Delta^\phi_k(\Rada a1k) = \sum_{\sigma} \ 
\sum_{r+s = k} c_{r} \cdot f_s \cdot  \varepsilon(\sigma) \mu_{r+1} 
\big(\Delta \mu_s(\rada{a_{\sigma(1)}}{a_{\sigma(s)}}),
\rada{a_{\sigma(s+1)}}{a_{\sigma(k)}}\big)
\end{equation}
where $\sigma$ runs over all $(r,s)$-unshuffles, $\varepsilon(\sigma)$
is the Koszul sign of $\sigma$ and 
\[
c_{r} := \frac{d^r \psi'\big(\phi(t)\big)}{dt^r}\Big|_{t=0}.
\]

\begin{example}[Classical \Linfty-braces]
\label{Dojde_zitra_Jaruska_na_parnik?}
The classical Koszul braces recalled in 
Example~\ref{dnes_vecer_s_Jarkou_a_Tondou_u_Pakousu} are
the twisting of $\calL_\Delta$ by the automorphism with the generating series
\[
\phi(t) := e^t-1 = \sum_{k \geq 1} \frac 1{k!}\ t^k  = 
t + \frac 1{2!}t^2  + \frac 1{3!}t^3 + \cdots
\]
whose inverse $\psi(t)$ equals
\[
\psi(t) = \ln(t+1) = 
\sum_{k \geq 1} \frac {(-1)^{k+1}}k\ t^k = 
t - \frac {t^2}2 + \frac {t^3}3 - \frac {t^4}4 + \cdots.
\]
Since $\psi'(t) = (1+t)^{-1}$, $\psi'\big(\phi(t)\big) = e^{-t}$,
therefore $c_{r} = (-1)^r$. As $f_s =1$ for each $s \geq 1$,
formula~(\ref{Musim_napsat_ten_uvodnik}) readily gives $\Delta^\phi_k=
\Phi^\Delta_k$ for each $k \geq 1$.
\end{example}

\begin{Example}[Hereditary exotic \Linfty-braces continued]
The braces in Example~\ref{Preletim_Vivata?} are given by the
automorphism with the generating series $\phi(t)= t + {t^2}/2$ whose
inverse equals
\[
\psi(t) = -1 + \sqrt{1+2t} = t - \frac1{2}{t^2} + \frac 1{2}t^3 -
  \frac{5  }{8}t^4 + \cdots =
t - \sum_{k\geq 2}\frac{(-t)^k}{2^{k-1}(k-1)}\binom {2k-2}k.
\] 
\end{Example}

\begin{Problem}
It is clear that an automorphism $\phi(t)$ with the generating
series~(\ref{mam-roztrhane_trenkybis}) leads to recursive braces if
and only if $f_k \in \{-1,+1\}$. 
Which property of the generating function guarantees the 
hereditarity?
\end{Problem}

\subsection{The Lie case}
\label{sec:lie-case}

One may ask how the previous material translates to the Lie algebra
case. One could expect to have, for a graded Lie algebra $L$ with a
differential~$\Delta$, natural \Cinfty-braces
$(L,\Delta,c_2,c_3,\ldots)$ emerging as the twistings of the trivial
\Cinfty-algebra by automorphisms of the Lie coalgebra $\Liec L$ and,
among these structures, a particular one that leads to higher-order
derivations of Lie algebras.

Recall that a {\em \Cinfty-algebra\/} (also called, in
\cite[\S1.4]{markl:JPAA92}, a {\em balanced \Ainfty-algebra\/}) is an
\Ainfty-algebra as in Definition~\ref{sec:ainfty-algebras}
whose structure operations vanish on decomposables of the shuffle
product. As in the \Ainfty- or \Linfty-cases,
\Cinfty-algebras  can
equivalently be described as square-zero coderivations of the
cofree conilpotent Lie coalgebra $\Liec L$ cogenerated by $L$.

We may try to proceed as in the previous two cases. 
We have the trivial \Cinfty-algebra $\calC_\Delta =
(L,\Delta,0,0,\ldots)$, thus any natural automorphism $\phi :  \Liec L
\to \Liec L$ determines a coderivation $\Delta^\phi := \phi^{-1}\Delta \phi$
that squares to $0$, i.e.~\Cinfty-braces on $L$.

The sting lies in the notion of {\em naturality\/}. In constructing
\Ainfty-braces we very crucially relied on the fact that the
cofree conilpotent coassociative coalgebra cogenerated by $A$
materialized as the tensor {\em algebra\/} $\Ten A$ equipped with the
de-concatenation diagonal. Therefore natural operations $\Ten A \to A$
give rise to natural automorphisms of $\Tc A$ and thus also to
natural \Ainfty-braces. Similarly, the cofree conilpotent coalgebra
cogenerated by $A$ can be realized as the symmetric {\em algebra\/} $\Sym V$
with the de-concatenation (unshuffle) diagonal.  

We were however not able to find an explicit and
natural (i.e.~not depending e.g.~on the choice of a basis) formula for
a diagonal on the free Lie {\em algebra\/} $\Lie L$ that would make it
a cofree conilpotent {\em co\/}algebra; we were able to describe the
diagonal for Lie worlds of length $\leq 3$ only. It is given, for
$v,v_1,v_2,v_3 \in L$, by
\begin{equation}
\label{dalsi_Jarcina_sebevrazedna_epizoda}
\begin{aligned}
D(v) &= 0
\\
D\{v_1,v_2\} &= v_1\land v_2,
\\ 
D\{v_1,\{v_2,v_3\}\}  &=2 v_1 \land \{v_2,v_3\} - v_2 \land
\{v_3,v_1\} 
- v_3 \land \{v_1,v_2\}.  
\end{aligned}
\end{equation}
In the above display, we denoted the bracket in $\Lie L$ by $\{-,-\}$
to distinguish it from the bracket of $L$ which we will denote more
traditionally by $[-,-]$.

\begin{remark}
\label{Ceka_mne_cesta_proti_strasnemu_vetru}
The lack of an explicit diagonal for the free Lie algebra $\Lie L$ may
be related to the problem of describing the Eulerian idempotents $\eu_k
: \Ten^k X \to \Ten^k X$ \cite[Corollary~1.6]{reutenauer} in terms of
iterated {\em linearly-independent\/} Lie braces. While, for
$x,x_1,x_2,x_3 \in X$,
\begin{align*}
\eu_1(x) &= x,
\\
\eu_2(x_1 \ot x_2) &= \frac1{2!} [x_1,x_2] \ \mbox { and}
\\
\eu_2(x_1 \ot x_2 \ot x_3) &=  \frac1{3!}\big([[x_1,x_2],x_3]  + 
[x_1,[x_2,x_3]]\big),
\end{align*}
a similar formula for $\eu_k$ with $k \geq 4$ is not known.
\end{remark}

Let $\Liek L$ be the subspace of $\Lie L$ spanned by
elements of the product length $k$. 
As in the previous cases, each coalgebra automorphism $\phi :
\Liec L \to \Liec L$ is determined by its components $\phi_k :
\Liek L \to L$, $k \geq 1$.
One has also
the canonical maps
$\lambda_k : \Liek L \to L$ given by the
multiplication in $L$.  Assume we found a natural
isomorphism between $\Lie L$ and $\Liec L$ such that the induced
diagonal on $\Lie L$ agrees
with~(\ref{dalsi_Jarcina_sebevrazedna_epizoda}) on elements of length
$\leq 3$. 

Let $\phi = (\id_L, f_2\lambda_2,f_3\lambda_3,\ldots)$
be the automorphism whose $k$th component equals $f_k \lambda_k$
for some scalars $f_k \in \bfk$, and $\psi = (\id_L,
g_2\lambda_2,g_3\lambda_3,\ldots)$ another one, with components $g_k
\lambda_k$, $g_k \in \bfk$.
Using~(\ref{dalsi_Jarcina_sebevrazedna_epizoda}), 
one derives the following formula for the first three components of the
composition:
\[
\psi\phi = \big(\id_L,(f_2+g_2)\lambda_2,(f_3 + 3 f_2g_2 + g_3)
\lambda_3,\ldots\big).
\]
With this formula, one easily verifies that the inverse of $\phi =
(\id_L,\lambda_2,\lambda_3,\ldots)$ is of the form  $\phi^{-1} =
(\id_L,-\lambda_2,+2\lambda_3,\ldots)$.

The twisting of the trivial \Cinfty-algebra $\calC_\Delta$ by $\phi =
(\id_L,\lambda_2,\lambda_3,\ldots)$ leads to the following formulas
for $c^\Delta_k = \Delta^\phi_k$; we for clarity omit the Koszul signs:
\begin{align*}
c^\Delta_1(v) &= \Delta(v),
\\
c^\Delta_2\{v_1,v_2\} &= 
\Delta[v_1,v_2] -  [\Delta v_1,v_2]- \NIC{v_1} [v_1,\Delta v_2],
\\
c^\Delta_3\big\{v_1,\{v_2,v_3\}\big\} &= \Delta[v_1,[v_2,v_3]]
-2\NIC{v_1}[v_1,\Delta[v_2,v_3]]+ [\Delta v_2,[v_3,v_1]] 
\\
& \hskip 1.5em
+ \NIC{v_2}[v_2,\Delta[v_3,v_1]]
+  [\Delta v_3,[v_1,v_2]] +\NIC{v_3} [v_3,\Delta[v_1,v_2]]
\\
& \hskip 1.5em
+2\big(\NIC{v_1}[v_1,[\Delta(v_2),v_3]] + \NIC{v_1+v_2}[v_1,[v_2,\Delta(v_3)]]\big).
\end{align*}
It is easy to verify that
\[
c^\Delta_3\big\{v_1,\{v_2,v_3\}\big\} = 
-2\NIC{v_1} \big[v_1,c_2^\Delta\{v_2,v_3\}\big] - 
c_2^\Delta\big\{v_2,[v_3,v_1]\big\} - 
c_2^\Delta\big\{v_3,[v_1,v_2]\big\}.
\]
Therefore, if $\Delta$ is a derivation of the Lie algebra $L$, i.e.~if
$c^\Delta_2=0$, $c^\Delta_3$ vanishes as expected.

We saw in the \Ainfty- resp.~\Linfty-cases that the twisting by the
automorphism whose components were the canonical maps $\mu_k : \Ten^k
A \to A$ resp.~$\mu_k: \Sym^k A \to A$, lead to the (unique) braces
giving a sensible notion of higher-order derivations.  This justifies:

\begin{Conjecture}
The twisting of  $\calC_\Delta =
(L,\Delta,0,0,\ldots)$ by  $\phi =
(\id_L,\lambda_2,\lambda_3,\ldots)$ gives rise to \Cinfty-braces satisfying the
analogs of conditions~(\ref{eq:12})--(\ref{eq:17}). 
\end{Conjecture}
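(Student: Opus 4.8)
The plan is to verify the three conditions separately, each resting on the fact that, once a natural coalgebra isomorphism $\Lie L \cong \Liec L$ extending~(\ref{dalsi_Jarcina_sebevrazedna_epizoda}) to all lengths is in hand, the twisting $\calC_\Delta^\phi = (L,\phi^{-1}\Delta\phi)$ is \emph{automatically} a \Cinfty-algebra by the general construction of Section~\ref{sec:constr-high-brack}. Thus the conjecture reduces to checking the analogs of~(\ref{eq:12})--(\ref{eq:17}) for the specific braces $c^\Delta_k = \Delta^\phi_k$. The analog of~(\ref{eq:12}) is then immediate from the displayed value of $c^\Delta_2\{v_1,v_2\} = \Delta[v_1,v_2] - [\Delta v_1,v_2] - \NIC{v_1}[v_1,\Delta v_2]$: an order-$1$ $c$-derivation is exactly a derivation of the Lie bracket, and no renormalization~(\ref{eq:10}) is needed because $\phi_k = \lambda_k$ carries coefficient $f_k = 1$.

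For the recursivity condition~(\ref{eq:17}) I would argue in two steps. First, integrality: the scalars defining $\phi$ are $f_k = 1 \in \mathbb Z$, and the components $g_k$ of $\phi^{-1}$ are integers because the composition law on the automorphisms $\phi = (\id_L,f_2\lambda_2,f_3\lambda_3,\ldots)$ has integer structure constants, as already visible in $\psi\phi = \bigl(\id_L,(f_2+g_2)\lambda_2,(f_3+3f_2g_2+g_3)\lambda_3,\ldots\bigr)$; this pattern propagates to all lengths once the diagonal is known. Hence $\Delta^\phi$ is defined over $\mathbb Z$. Second, the leading coefficient $C_k$: in the \Cinfty-analog of the twisting formula~(\ref{v_pondeli_mne_snad_Jaruska_zachrani}), the term carrying the fully nested bracket $\Delta\lambda_k$ is the product of $f_k=1$ with the constant coefficient of the twisting, which equals $\pm 1$ exactly as the constant $c_0$ did in the Koszul and \Boj\ computations; so $C_k = \pm 1$, as required.

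The heart of the matter is hereditarity, the analog of~(\ref{eq:15}), and here the base case is already recorded: the displayed identity exhibits $c^\Delta_3\{v_1,\{v_2,v_3\}\}$ as brackets and insertions of $v_1$ applied to $c^\Delta_2$, so $c^\Delta_2 = 0$ forces $c^\Delta_3 = 0$. I would prove by induction on $k$ a recursion of the shape
\[
c^\Delta_{k+1}\{v_1,w\}
= -k\,\NIC{v_1}\bigl[v_1,\,c^\Delta_k(w)\bigr]
- \sum_{w'}\NIC{v_1}c^\Delta_k(w'),
\]
where $w$ ranges over length-$k$ Lie words, $w'$ over the words obtained by bracketing $v_1$ into $w$, with the evident Koszul signs; the coefficient $-k$ matches the $-2$ of the base case. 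The decisive point is that this recursion contains \emph{no} term outside the span of the $c^\Delta_k$'s; granting that, hereditarity follows at once. Establishing it amounts to re-expressing $\Delta^\phi_{k+1}$ through $\Delta^\phi_k$ by coassociativity of the diagonal $D$ together with its low-length values~(\ref{dalsi_Jarcina_sebevrazedna_epizoda}).

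The hard part, and the reason this is stated as a conjecture, is that every step above ultimately needs $D$ in product lengths $\ge 4$, which is precisely the data the paper cannot produce explicitly --- cf.\ Remark~\ref{Ceka_mne_cesta_proti_strasnemu_vetru} on the Eulerian idempotents $\eu_k$, $k \ge 4$. To bypass the missing diagonal I would transport the whole question to the associative world: embed $L \hookrightarrow U(L)$ into its universal enveloping algebra, extend $\Delta$ to the associative derivation it induces on $U(L)$, and use the (dual) Eulerian idempotent to realize $\Liec L$ as the coalgebra of primitives inside $\Tc\bigl(U(L)\bigr)$. One then hopes to recover $c^\Delta_k$ as the restriction of \Boj's braces $b^\Delta_k$ of Example~\ref{Jaruska_mozna_prijede_ve_ctvrtek_do_Prahy} to Lie words, whence integrality, the leading coefficient, and hereditarity would all descend from the already-established properties of \Boj's braces. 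Making this comparison precise --- in particular matching the coefficients $f_k = 1$ of $\phi$ with the generating series $t/(1-t)$ of \Boj's automorphism through the Eulerian projection --- is where I expect the genuine difficulty to lie.
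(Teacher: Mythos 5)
This statement is an open \emph{conjecture} in the paper: the author gives no proof and says explicitly that verifying it ``depends on describing the isomorphism $\Lie L \cong \Liec L$,'' which is constructed only up to product length $3$ in~(\ref{dalsi_Jarcina_sebevrazedna_epizoda}). Your proposal is an honest and reasonable plan of attack, but it is not a proof, and you correctly identify why: every substantive step --- the integrality of the components $g_k$ of $\phi^{-1}$, the value of the leading coefficient $C_k$, and above all the hereditarity recursion with the conjectured coefficient $-k$ --- requires the diagonal $D$ in lengths $\geq 4$, which is precisely the missing datum. Several of your intermediate claims are themselves only plausible extrapolations rather than arguments. In particular, the assertion that the composition law on automorphisms has integer structure constants ``propagates to all lengths'' is far from automatic: the natural candidate for the identification $\Lie L \cong \Liec L$ goes through the Eulerian idempotents $\eu_k$, whose known expressions carry denominators $1/k!$, so rationality rather than integrality is the default expectation and~(\ref{eq:17}) could genuinely fail. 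Likewise the coefficient $-k$ in your proposed recursion is matched only against the single data point $-2$ at $k=2$.

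Your proposed bypass --- embedding $L$ into $U(L)$, extending $\Delta$, and trying to recover $c^\Delta_k$ as the restriction of \Boj's braces to Lie words via the Eulerian projection --- is attractive but runs head-on into the obstruction the paper itself records in Remark~\ref{Ceka_mne_cesta_proti_strasnemu_vetru}: the $\eu_k$ for $k \geq 4$ are not known to be expressible through iterated linearly independent Lie brackets, and this is presented as the very reason the diagonal (hence the conjecture) is out of reach. There are also two unaddressed compatibility issues in that route: $\Delta$ is \emph{not} assumed to be a derivation of $L$, so its ``extension'' to $U(L)$ is not canonical; and \Boj's braces $b^\Delta_k$ need not preserve the primitives of $\Tc(U(L))$, so the claimed restriction may not even land in $L$. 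In short: the approach is consistent with how one would want to prove the statement, but the conjecture remains open both in the paper and after your proposal, with the gap located exactly where you place it.
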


Verifying this conjecture of course depends on describing the
isomorphism $\Lie L \cong \Liec L$.  

\subsection{Other cases}
Let us finish this note by formulating the most general context in
which our approach may work. We will
need the language of operads for which we refer for
instance to~\cite{markl:handbook,markl-shnider-stasheff:book}. Let
$\calP$ be a quadratic Koszul operad and $A$ a
$\calP$-algebra. Denote by $\calP^!$ the Koszul (quadratic) dual of
$\calP$ \cite[Def.~II.3.37]{markl-shnider-stasheff:book}
and by $\Rafik$ (resp.~$\Broucek$) the free
${\calP}$-algebra (resp.~the cofree conilpotent ${\calP}$-coalgebra)
generated (resp.~cogenerated) by $A$.

A strongly homotopy $\calP^!$-algebra, also called a
$\calP^!_\infty$-algebra,
is determined by a square-zero coderivation $p$ of $\Broucek$.  If
$\Delta$ is a differential on $A$, one 
has as before the trivial $\calP_\infty$-algebra 
${\mathcal P}_\Delta = (A,\Delta,0,0,\ldots)$ given by
extending $\Delta$ to $\Broucek$.

Under the presence of a natural identification 
$\Rafik \cong \Broucek$, one may speak about
natural automorphisms that twist $P_\Delta$ to
$\calP^!_\infty$-braces. One has the automorphism $\phi :
\Broucek \to \Broucek$ whose components are
given by the structure map $\Rafik \to A$. It is sensible
to conjecture that the related $\calP^!_\infty$-braces lead to a
reasonable notion of higher-order derivations of $\calP$-algebras.

In this general set-up, \Ainfty-braces related to associative
algebras correspond to the $\calP =  \Ass$ case, 
\Linfty-braces related to commutative associative algebras to
$\calP = \Com$, and \Cinfty-braces related to Lie algebras to 
$\calP = \Lieop$, where $\Ass$, $\Com$ and $\Lieop$ denote the
operad for associative, commutative associative and Lie algebras, 
respectively.\footnote{Recall that $\Ass^! \cong \Ass$,
  $\Com^! \cong \Lieop$ and $\Lieop^! \cong \Com$, see e.g.~\cite[Example~II.3.38]{markl-shnider-stasheff:book}.} 

\part{Naturality and acyclicity}

\section{Naturality}
\label{sec:naturality}

This section is devoted to natural operations $\otexp Ak \to A$
(resp.~$\Sym^k A \to A$), where $A$ is a graded associative
(resp.~graded commutative associative) algebra with a differential
$\Delta$.  Since in the commutative associative case the symmetric group action
brings extra complications but nothing conceptually new, we analyze in
detail only the associative  case.

\vskip .3em

\noindent 
{\bf Associative case.}  We are going introduce the space $\Nat(k)$ of
natural operations $\otexp Ak \to A$ and show that $\Nat(k)$, graded
by the degrees of maps and equipped with the differential induced by
$\Delta$, is acyclic for each $k \geq 2$. The content of this section
is a kindergarten version of the analysis of Deligne's conjecture
given in~\cite{batanin-markl}.

\subsection{Natural operations}
Intuitively, natural operations $\otexp Ak \to A$ are linear maps
composed from the data available for an arbitrary graded associative
algebra $A$ with a differential.  Equivalently, natural operations ale
linear combinations of compositions of `elementary' operations, which
are the multiplication, the differential, permutations of the inputs
and projections to the homogeneous parts.  Our categorial definition
given below is chosen so that it excludes the projections; the reason
is explained in Exercise~\ref{sec:main-results-7}. Our theory can,
however, easily be extended to include the projections as well, 
cf.~Exercise~\ref{sec:acyclicity}.  Let us start with:

\begin{example}
\label{sec:natural-operations}
The space $\Nat(1)$ of natural operations $A \to A$ is
two-dimensional, spanned by the identity $\id :A \to A \in \Nat(1)^0$
in degree $0$ and $\Delta :A \to A \in \Nat(1)^1$ in degree $1$.
The space $\Nat(2)^0$ is spanned by two operations, 
\[
a \ot b \mapsto
ab \ \mbox { and  } \ a \ot b \mapsto \zn{|a||b|}  ba,
\] 
where $ab$ resp.~$ba$ denotes the
product in $A$.  The space $\Nat(2)^1$ is $6$-dimensional, 
spanned by the operations
\begin{subequations}
\begin{equation*}
%\label{eq:4}
a \ot b \mapsto \Delta(a)b,\ a \ot b \mapsto \zn{|a|}   a\Delta(b),\ a \ot 
b \mapsto \Delta(ab),
\end{equation*}
and compositions of these operations with the permutation $a\ot b
\mapsto \zn{|a||b|} b \ot a$, i.e.~by
\begin{equation*}
%\label{eq:5}
a \ot b \mapsto \zn{|a||b|}\Delta(b)a,\ a \ot b \mapsto
\zn{|b|(|a| + 1)} b\Delta(a),\ a \ot 
b \mapsto \zn{|a||b|}\Delta(ba).
\end{equation*}
\end{subequations}
Likewise, $\Nat(2)^2$ is spanned by
\[
a \ot b \mapsto \Delta(a)\Delta(b),\ a \ot b \mapsto 
\zn{|a|}   \Delta\big(a\Delta(b)\big),\ a \ot 
b \mapsto \Delta\big(\Delta(a)b\big)
\]
and their permutations. Finally, $\Nat(2)^3$ is two-dimensional,  spanned by
\[
a\ot b \mapsto \Delta\big(\Delta(a)\Delta(b)\big) \ \mbox { and } \
a\ot b \mapsto \zn{|b|(|a| + 1)} \Delta\big(\Delta(b)\Delta(a)\big).
\]
There are no natural operations $\otexp A2 \to A$ of degrees $>4$. 
Observe that the Euler characteristic of the graded space $\Nat(2)^*$
is $2-6+6-2 = 0$. This indicates its acyclicity.
\end{example}

All operations $\beta: \otexp A2 \to A \in \Nat(2)$ listed in
Example~\ref{sec:natural-operations} share the following property. Let
$(A,\Delta_A)$ and $(B,\Delta_B)$ be graded associative 
algebras with differentials and
$\varphi : A \to B$ a linear map such that 
\begin{equation}
\label{jsem_unaveny}
\varphi(a'a'') = \varphi(a')\varphi(a'')\ \mbox { and }\
\varphi\Delta_A(a) = \Delta_B \varphi(a),\ \mbox { for each } \ a, a',a'' \in A.
\end{equation}
Then 
\begin{equation}
\label{eq:6}
\beta\big(\varphi(a')\ot \varphi(a'')\big) = \varphi\big(\beta(a'\ot
a'')\big)\  \mbox { for each } \ a',a'' \in A.
\end{equation}
Let us emphasize that we {\em do not assume\/} 
the map $\varphi$ to be homogeneous
of degree $0$, it can be an {\em arbitrary\/} linear map $A \to B$
satisfying~(\ref{jsem_unaveny}).

\begin{Example}
This example explains why we did not require the homogeneity of the
map $\varphi$ in~(\ref{jsem_unaveny}).  Consider the operation $\beta
: \otexp A2 \to A$ defined by
\[
\beta(a \ot b) := \cases{ab \in A}{if $|a| = 2$ and $|b| = -13$
  and}0{otherwise.} 
\]
It is certainly `natural' in that it is defined using the data that
are available for any graded associative algebra, 
but we do not want to consider this type
of operations.\footnote{Although, as indicated 
in Exercise~\ref{sec:acyclicity}, we
  can extend our theory to include also operations of this type.} 
What excludes $\beta$ from the family of
well-behaved operations is precisely the lack of
naturality~(\ref{eq:6}) with respect to non-homogeneous maps.

To see why it is so, take $A := \Ten(a,b)$, the tensor algebra on two
generators with $|a| := 2$ and    $|b| := -13$, and $B := \Ten(u,v)$
generated by $u$, $v$ with  $|u| = |v| := 0$. There clearly 
exists a unique `non-homogeneous' homomorphism $\varphi : A \to B$
such that $\varphi(a) = u$ and $\varphi(b) = v$.\footnote{The reason
is that $\Ten(a,b)$ is free also in the category of {\em ungraded\/}
associative algebras.} 
But then
\[
0 = \beta(u\ot v) = 
\beta\big(\varphi(a)\ot \varphi(b)\big) \not= \varphi\big(\beta(a\ot
b)\big) = \varphi(ab) = \varphi(a)\varphi(b) = uv,
\]
so $\beta$ is not natural with respect to our extended notion of a
homomorphism, though it is still natural with respect to conventional
homomorphisms as can be easily checked. 
\end{Example}

Natural operations thus appear as natural
transformations $\beta : \bigotimes^k \to \Box$ 
from the tensor power functor $\bigotimes^k : \Algsdelta \to
\Vect$ to the forgetful functor $\Box   : \Algsdelta \to
\Vect$, where $\Algsdelta$ is the category of graded associative
algebras with a differential, with morphisms as
in~(\ref{jsem_unaveny}), and $\Vect$ the category of vector
spaces. We however prefer a more explicit:

\begin{definition} 
\label{hh}
For $k \geq 1$, let $\Nat(k)$ be the abelian group of families of linear maps
\[ 
\beta_{A} : 
\otexp Ak \to A
\] 
indexed by graded associative algebras $A = (A,\Delta)$ with a differential such
that, for any linear map $\varphi:A \rightarrow B$
satisfying~(\ref{jsem_unaveny}), 
the diagram
\begin{equation}
\label{eq:16}
\xymatrix{
\otexp Ak \ar[r]^{\beta_A}\ar[d]_{\otexp \varphi k}
&
A\ar[d]^{\varphi}
\\
\otexp Bk\ar[r]^{\beta_B}
&
B
}
\end{equation}
commutes.
\end{definition}

We are going to prove a structure theorem for natural operations. 
Denote by $\Fr(x_1,\ldots,x_k)$ the free graded associative algebra
with a differential, generated by
degree $0$ elements $x_1,\ldots,x_k$ (an explicit
description is given in~\S\ref{sec:algebra-frrada-x1k-1}). Denote also by
$\Fr_{1,\ldots,1}(x_1,\ldots,x_k)$
the subspace of $\Fr(x_1,\ldots,x_k)$ spanned by the words that
contain each generator precisely once.

\begin{proposition}
\label{pojedu_Jarce_zalit_kyticky}
For each $k \geq 1$ one has a natural isomorphism
\[
\xi: \Nat(k) \cong \Fr_{1,\ldots,1}(x_1,\ldots,x_k). 
\]
\end{proposition}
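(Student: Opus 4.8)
The plan is to establish the isomorphism $\xi : \Nat(k) \cong \Fr_{1,\ldots,1}(x_1,\ldots,x_k)$ by exhibiting a \emph{universal} natural operation and using a Yoneda-type argument. The key observation is that $\Fr := \Fr(x_1,\ldots,x_k)$ is itself a graded associative algebra with a differential, and the $k$-tuple $(x_1,\ldots,x_k)$ is a distinguished element of $\otexp {\Fr}k$. Given any family $\beta \in \Nat(k)$, I would define
\[
\xi(\beta) := \beta_\Fr(x_1 \ot \cdots \ot x_k) \in \Fr.
\]
First I must check this lands in the subspace $\Fr_{1,\ldots,1}(x_1,\ldots,x_k)$. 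This follows from naturality with respect to the rescaling endomorphisms: for scalars $\lambda_1,\ldots,\lambda_k \in \bfk$, the assignment $x_i \mapsto \lambda_i x_i$ extends to a (possibly non-homogeneous, and this is where the generality of~(\ref{jsem_unaveny}) is essential) endomorphism $\varphi$ of $\Fr$ commuting with the differential. Applying the commuting square~(\ref{eq:16}) to this $\varphi$ forces $\xi(\beta)$ to be multihomogeneous of multidegree $(1,\ldots,1)$ in the generators, i.e.\ each word in $\xi(\beta)$ contains every $x_i$ exactly once.

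The heart of the proof is constructing the inverse. Given a word $w \in \Fr_{1,\ldots,1}(x_1,\ldots,x_k)$, I would define a family of maps $\beta^w_A : \otexp Ak \to A$ by declaring that $\beta^w_A(a_1 \ot \cdots \ot a_k)$ is the result of substituting $a_i$ for $x_i$ throughout $w$ (with the appropriate Koszul signs arising because the $a_i$ need not have degree $0$). Because $w$ uses each generator exactly once, this substitution is well-defined and linear in each slot; formally, it is the image of $a_1 \ot \cdots \ot a_k$ under the unique differential-algebra homomorphism $\Fr \to A$ sending $x_i \mapsto a_i$, restricted to the multilinear part. Naturality of $\beta^w$ is then automatic: for $\varphi : A \to B$ as in~(\ref{jsem_unaveny}), both composites in~(\ref{eq:16}) compute the substitution of $\varphi(a_i)$ into $w$, since $\varphi$ respects products and commutes with $\Delta$. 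I would then verify $\xi(\beta^w) = w$ directly (substituting $x_i$ into $w$ returns $w$) and, conversely, $\beta^{\xi(\beta)} = \beta$.

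The main obstacle, and the step requiring genuine care, is this last identity $\beta^{\xi(\beta)} = \beta$, which says that a natural operation is completely determined by its value on the universal tuple $(x_1,\ldots,x_k) \in \otexp {\Fr}k$. This is exactly a Yoneda/freeness argument: for any algebra $A$ and any $a_1,\ldots,a_k \in A$, the substitution $x_i \mapsto a_i$ defines a homomorphism $\varphi_a : \Fr \to A$ of differential graded algebras, and naturality~(\ref{eq:16}) applied to $\varphi_a$ gives
\[
\beta_A(a_1 \ot \cdots \ot a_k) = \beta_A\big(\varphi_a(x_1) \ot \cdots \ot \varphi_a(x_k)\big) = \varphi_a\big(\beta_\Fr(x_1 \ot \cdots \ot x_k)\big) = \varphi_a(\xi(\beta)),
\]
and the right-hand side is precisely $\beta^{\xi(\beta)}_A(a_1 \ot \cdots \ot a_k)$ by definition of $\beta^w$. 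The subtle point is that $\varphi_a$ is in general not degree-preserving, so the whole argument hinges on Definition~\ref{hh} permitting arbitrary linear maps satisfying~(\ref{jsem_unaveny}); the accompanying Example involving $\Ten(a,b)$ and $\Ten(u,v)$ is there precisely to show this generality is indispensable. Finally, I would note that $\xi$ is compatible with the differential induced by $\Delta$ on both sides, so it is an isomorphism of complexes, which is what the subsequent acyclicity statements will require.
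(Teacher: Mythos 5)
Your proposal follows essentially the same route as the paper: the same evaluation map $\xi(\beta) := \beta_{\Fr}(x_1 \ot \cdots \ot x_k)$, the same rescaling argument (naturality with respect to the non-homogeneous endomorphism $x_i \mapsto \lambda_i x_i$) to force multilinearity, and the same inverse via substitution $\beta(\xi)_A = \Phi^A_{\rada{a_1}{a_k}}(\xi)$ together with the freeness/Yoneda identity $\beta^{\xi(\beta)} = \beta$. The argument is correct and complete; indeed you spell out the inverse direction and the mutual-inverse check in slightly more detail than the paper, which leaves those as an exercise.
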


\begin{proof}
Denote, for brevity, $\Fr : = \Fr(x_1,\ldots,x_k)$. 
Let $A = (A,\Delta)$ 
be an arbitrary algebra with a differential.
Given
elements $\Rada a1k \in A$, there exists a unique
$\Phi^A_{\Rada a1n} : \Fr \to A$
satisfying~(\ref{jsem_unaveny}),  specified by 
requiring $\Phi^A_{\Rada a1n}(x_i) := a_i$ for $1 \leq i
\leq k$.

Given a natural operation $\beta \in \Nat(k)$ and $\Rada a1k \in A$, 
one has the commutative diagram
\begin{equation}
\label{JarUska}
\xymatrix{
\otexp {\Fr}k \ar[r]^{\beta_{\Fr}}\ar[d]_{\otexp {(\Phi^A_{\Rada a1k})} k}
&
\Fr\ar[d]^{\Phi^A_{\Rada a1k}}
\\
\otexp Ak\ar[r]^{\beta_A}
&
A
}
\end{equation}
by naturality~(\ref{eq:16}).
We will also need the particular case of~(\ref{JarUska}) when $A = \Fr$
and $a_i = u_i x_i$, for some scalars $u_i \in \bfk$, $1 \leq i \leq k$:
\begin{equation}
\label{JarUskA}
\xymatrix{
\otexp {\Fr}k \ar[r]^{\beta_{\Fr}}
\ar[d]_{\otexp {(\Phi^A_{\rada{u_1x_1}{u_kx_k}})}k}
&
\Fr\ar[d]^{\Phi^A_{\rada{u_1x_1}{u_kx_k}}}
\\
\otexp {\Fr}k\ar[r]^{\beta_{\Fr}}
&
\ \Fr.
}
\end{equation}

Recall that $\Fr_{\rada 11} := \Fr_{\rada 11}(x_1,\ldots,x_k)$ denotes
the subspace of elements containing each generator $\Rada
x1k$ precisely once. We begin the actual proof by observing that each natural
operation $\beta \in \Nat(k)$ determines an element
$\xi(\beta) \in \Fr$ by
\begin{equation}
\label{=}
\xi(\beta) := \beta_{\Fr}(x_1 \otimes \cdots \ot x_n) \in \Fr.
\end{equation}
We will show that, quite miraculously, $\xi(\beta)$
belongs to $\Fr_{\rada 11}$.  
Clearly, $\Fr$ decomposes as
\[
\Fr = \textstyle\bigoplus_{\Rada j1k \geq 0} \Fr_{\Rada j1k}, 
\]
where $\Fr_{\Rada j1k} \subset \Fr$ is the subspace of elements
having precisely $j_i$ instances of $x_i$ for each $1 \leq i \leq
k$. The endomorphism $\Phi^A_{\rada{u_1x_1}{u_kx_k}} 
: \Fr \to \Fr$ 
acts on $\Fr_{\Rada j1k}$ 
by the multiplication with $u_1^{j_1}
\cdots u_k^{j_k}$; the subspace $\Fr_{\Rada j1k}\subset \Fr$
is, in fact, characterized by this property.  The element $\xi(\beta)$
uniquely decomposes as $\xi(\beta) = \sum_{\Rada j1k \geq 0}
\xi(\beta)_{\Rada j1k}$,
for some $\xi(\beta)_{\Rada j1k} \in \Fr_{\Rada j1k}$.

Let us turn our attention to~(\ref{JarUskA}). 
By the definition of the map $\Phi^A_{\rada{u_1x_1}{u_kx_k}}$, one has
\[
\beta_{\Fr}\big((\Phi^A_{\rada{u_1x_1}{u_kx_k}})^{\ot k}\big) 
(x_1 \ot \cdots \ot x_k)
=\beta_{\Fr}(u_1x_1 \ot \cdots \ot u_kx_k),
\]
while the linearity of $\beta_{\Fr}$ implies
\[
\beta_{\Fr}(u_1x_1 \ot \cdots \ot u_kx_k) =
u_1\cdots u_k \cdot \beta_{\Fr} (x_1 \ot \cdots \ot x_k) = u_1\cdots
u_k \cdot \xi(\beta).
\]
On the other hand
\begin{eqnarray*}
\Phi^A_{\rada{u_1x_1}{u_kx_k}}(\beta_{\Fr})(x_1 \ot \cdots \ot x_k) = 
\Phi^A_{\rada{u_1x_1}{u_kx_k}}\big(\xi(\beta)\big)
=  \hskip -.3em
\sum_{\Rada j1k \geq 0}  u_1^{j_1}\cdots u_k^{j_k}  
\cdot \xi(\beta)_{\Rada j1k},
\end{eqnarray*}
therefore the commutativity of~(\ref{JarUskA}) means that 
\[
u_1 \cdots u_k \cdot \xi(\beta) =
\sum_{\Rada j1n \geq 0}   u_1^{j_1}\cdots u_k^{j_k}  
\cdot \xi(\beta)_{\Rada j1k}
\]
for each $\Rada u1k \in \bfk$. We conclude that $\xi(\beta)_{\Rada j1k} = 0$ if
$(\Rada j1k) \not= (\rada 11)$, so $\xi(\beta) = \xi(\beta)_{\rada 11}
\in \Fr_{\rada 11}$ as claimed.

We leave as an exercise to prove prove that, vice versa, 
each element $\xi \in \Fr_{\rada 11}$ determines a 
natural operation $\beta(\xi) \in
\Nat(k)$ by the formula 
\begin{equation}
\label{Jarunka-pusa}
\beta(\xi)_{A}(a_1 \ot \cdots \ot  a_k) = 
\Phi^A_{\rada{a_1}{a_k}}(\xi),\ \mbox { for each $\Rada a1k\in A$,}
\end{equation}
cf.~the proof of~\cite[Proposition~2.9]{batanin-markl}.  The above
constructions define mutually inverse correspondences $\beta \mapsto
\xi(\beta)$ and $\xi \mapsto \beta(\xi)$ that give the isomorphism of
the proposition.
\end{proof}

\begin{Remark}
A crucial step of the previous proof was that $\xi(\beta)$
belonged to $\Fr_{\rada 11}$. It was implied by the multilinearity, which is a
particular feature of the monoidal structure given by $\ot$. In the
cartesian situation, $\xi(\beta)$ might have been an arbitrary element
of the free algebra~$\Fr(\Rada x1k)$.
\end{Remark}

\subsection{The algebra $\Fr(\Rada x1k)$.}
\label{sec:algebra-frrada-x1k-1}

Elements of the free 
algebra $\Fr = \Fr(\Rada x1k)$ are results of iterated applications of
the associative multiplication and the differential on the generators
$\Rada x1k$. The subspace $\Fr_{\rada11} = 
\Fr_{\rada11}(\Rada x1k)$ is spanned by
words containing each generator precisely once. A typical element of 
$\Fr_{\rada11}$
is thus an expression~as
\begin{equation}
\label{eq:7}
\Delta\big(\Delta(x_3)x_2\Delta(x_4)\big)x_5\Delta(x_2x_1).
\end{equation}
The algebra $\Fr$ and thus also $\Fr_{\rada11}$ is graded by the
number of occurrences of $\Delta$; the element in~(\ref{eq:7})
therefore belongs to $\Fr_{\rada11}^4$.

We can clearly encode elements of $\Fr_{\rada11}$ by
`flow diagrams' that record how the multiplication
and the differential are applied. For instance, the diagram
encoding~(\ref{eq:7}) is
\[
{% Picture saved by xtexcad 2.4
\unitlength=.5pt
\begin{picture}(160.00,110.00)(0.00,0.00)
\thicklines
\put(160.00,0.00){\makebox(0.00,0.00)[t]{\scriptsize $1$}}
\put(100.00,0.00){\makebox(0.00,0.00)[t]{\scriptsize $2$}}
\put(80.00,0.00){\makebox(0.00,0.00)[t]{\scriptsize $5$}}
\put(60.00,0.00){\makebox(0.00,0.00)[t]{\scriptsize $4$}}
\put(30.00,0.00){\makebox(0.00,0.00)[t]{\scriptsize $2$}}
\put(0.00,0.00){\makebox(0.00,0.00)[t]{\scriptsize $3$}}
\put(50.00,20.00){\makebox(0.00,0.00){$\bullet$}}
\put(10.00,20.00){\makebox(0.00,0.00){$\bullet$}}
\put(110.00,60.00){\makebox(0.00,0.00){$\bullet$}}
\put(50.00,60.00){\makebox(0.00,0.00){$\bullet$}}
\put(100.00,10.00){\line(1,1){30.00}}
\put(80.00,90.00){\line(1,-1){80.00}}
\put(30.00,40.00){\line(0,-1){30.00}}
\put(30.00,40.00){\line(1,-1){30.00}}
\put(80.00,90.00){\line(-1,-1){80.00}}
\put(80.00,110.00){\line(0,-1){100.00}}
\put(180.00,50.00){\makebox(0.00,0.00){.}}
\end{picture}}
\] 
Its underlying graph is a rooted (= oriented) planar tree with the root
pointing upwards. The labels of its leaves (= inputs) mark the
position of the generators. The vertices symbolize iterated multiplication
while the bullets the application of the differential.

We see that elements of $\Fr_{\rada11}(\Rada x1k)$ can be represented by linear
combinations of planar rooted trees $T$ such that 
\hfill\break 
\hglue 1em -- each vertex of $T$ has at least two inputs,
\hfill\break
\hglue 1em -- all internal edges and possibly 
some external edges are decorated by the bullet $\bullet$ and
\hfill\break 
\hglue 1em -- the leaves of $T$ are labelled by a permutation of
$(\rada 1k)$.

Let us denote by $\Tr(k)$ the set of all trees as
above. Proposition~\ref{pojedu_Jarce_zalit_kyticky} together with
our description of  $\Fr_{\rada11}(\Rada x1k)$ gives

\begin{corollary}
\label{sec:algebra-frrada-x1k}
For each $k \geq 1$ one has a natural isomorphism 
$\Nat(k) \cong \Span\big(\Tr(k)\big)$.
\end{corollary}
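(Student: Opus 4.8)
The plan is to combine the isomorphism $\Nat(k) \cong \Fr_{\rada11}(\Rada x1k)$ from Proposition~\ref{pojedu_Jarce_zalit_kyticky} with an explicit description of $\Fr_{\rada11}(\Rada x1k)$ as a span of decorated planar trees. Since the proposition already gives the natural isomorphism $\xi : \Nat(k) \cong \Fr_{\rada11}(\Rada x1k)$, the corollary reduces to producing a natural isomorphism $\Fr_{\rada11}(\Rada x1k) \cong \Span\big(\Tr(k)\big)$ and composing the two.

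First I would make precise the bijection between a basis of $\Fr_{\rada11}(\Rada x1k)$ and the set $\Tr(k)$. The text has already indicated the correspondence: an element of $\Fr_{\rada11}$ is an iterated application of the associative multiplication $\mu$ and the differential $\Delta$ to the generators $\Rada x1k$, each used exactly once, as in the sample expression~(\ref{eq:7}). The plan is to read off from such an expression a planar rooted tree $T$ whose leaves are labelled by a permutation of $(\rada 1k)$, whose internal vertices record the iterated multiplications (each with at least two inputs, since binary associative multiplication composes to an $n$-ary multiplication for $n \geq 2$), and whose edges carry a bullet decoration precisely where $\Delta$ is applied; internal edges are always decorated because every subexpression feeding into a higher multiplication must sit inside a $\Delta$ to be a genuinely new word rather than a regrouping absorbed by associativity, while external (leaf and root) edges may or may not carry a bullet.

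The key steps, in order, are: (i) define the map from tree-monomials $T \in \Tr(k)$ to elements of $\Fr_{\rada11}$ by interpreting vertices as multiplication and bullets as $\Delta$, reading leaves left-to-right in planar order; (ii) check this map is well-defined and lands in $\Fr_{\rada11}$, each generator appearing once by the labelling condition; (iii) verify that distinct trees give linearly independent monomials, so the map is injective on $\Span\big(\Tr(k)\big)$; and (iv) verify surjectivity, i.e.~that every element of $\Fr_{\rada11}$ is a linear combination of such tree-monomials, which follows from flattening iterated binary products into $n$-ary ones using associativity and noting that the grading by number of $\Delta$'s matches the bullet count. These checks are essentially the content already sketched in the paragraph preceding the corollary, so I would present them concisely.

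The main obstacle will be establishing that the tree-monomials are genuinely a \emph{basis}, i.e.~the linear independence in step~(iii), because associativity of $\mu$ creates the only relations among words, and I must argue that the chosen normal form (each internal vertex an $n$-ary multiplication with $n \geq 2$, with the requirement that internal edges be decorated) picks exactly one representative per associativity class. The subtlety is precisely why internal edges \emph{must} be bulleted: an undecorated internal edge would let the child multiplication merge into the parent by associativity, producing no new word and hence a redundant tree. Once one observes that the bullet-on-internal-edges condition is exactly what rigidifies the tree so that no two distinct $T \in \Tr(k)$ represent the same element of $\Fr_{\rada11}$, the bijection with a basis is immediate and the corollary follows by composing with $\xi$.
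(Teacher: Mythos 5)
Your proposal is correct and follows exactly the paper's route: compose the isomorphism $\xi$ of Proposition~\ref{pojedu_Jarce_zalit_kyticky} with the identification of $\Fr_{\rada11}(\Rada x1k)$ with the span of decorated planar trees, which the paper treats as immediate from the flow-diagram description. Your step~(iii), explaining that the bullet-on-internal-edges condition is precisely what makes the tree-monomials a basis modulo associativity, fills in a detail the paper leaves implicit but does not change the argument.
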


It follows from general
theory~\cite[Proposition~II.1.27]{markl-shnider-stasheff:book} that
$\Nat(k)$ is the arity $k$-th piece of the operad $\Nat$ whose
algebras are couples $(A,\Delta)$ consisting of an associative algebra
and a differential. We will, however, not need this interpretation in
the sequel.

\begin{Example}
Corollary~\ref{sec:algebra-frrada-x1k} offers the
following description of $\Nat(2)$:
\begin{align*}
\Nat(2)^0 & =\Span\left(\rule{0pt}{18pt} \right.
\lachtanek{}{}{}12, \lachtanek{}{}{}21 \left. \rule{0pt}{18pt}  \right),
\\
\Nat(2)^1 &=\Span\left(\rule{0pt}{18pt} \right.
\lachtanek{\bullet}{}{}12,\lachtanek{}{\bullet}{}12,\lachtanek{}{}{\bullet}12,
\lachtanek{\bullet}{}{}21,
\lachtanek{}{\bullet}{}21,\lachtanek{}{}{\bullet}21
\left. \rule{0pt}{18pt}  \right),
\\
\Nat(2)^2 &=\Span\left(\rule{0pt}{18pt} \right.
\lachtanek{\bullet}{\bullet}{}12,\lachtanek{}{\bullet}{\bullet}12,
\lachtanek{\bullet}{}{\bullet}12,
\lachtanek{\bullet}{\bullet}{}21,
\lachtanek{}{\bullet}{\bullet}21,\lachtanek{\bullet}{}{\bullet}21
\left. \rule{0pt}{18pt}  \right), \ \mbox { and}
\\
\Nat(2)^3 &=\Span\left(\rule{0pt}{18pt} \right.
\lachtanek{\bullet}{\bullet}{\bullet}12,
\lachtanek{\bullet}{\bullet}{\bullet}21
\left. \rule{0pt}{18pt}  \right).
\end{align*}
We leave as an exercise to relate the above description to the
operations listed in Example~\ref{sec:natural-operations}. 
\end{Example}

\begin{example}
\label{Vcera_na_zavodech_v_Tabore_6ty}
It easily follows from Corollary~\ref{sec:algebra-frrada-x1k} that
$\Nat(k)^0$ is spanned by operations 
\[
\beta(\Rada a1k) = \varepsilon(\sigma) \cdot
a_{\sigma(1)}\cdots a_{\sigma(k)}, \ \sigma \in \Sigma_n,
\]
i.e.~$\Nat(k)^0 \cong \bfk[\Sigma_n]$ for $k \geq 1$.
\end{example}

\subsection{Acyclicity}
There is a differential on $\Nat(k)$ induced by
$\Delta$. For $\beta : \otexp Ak \to A$ it is defined by 
\[
\delta(\beta) := \Delta \beta - (-1)^{|\beta|} \sum_{1 \leq i \leq k}
\beta\big(\otexp \id {(i-1)} \ot \Delta \ot \otexp \id {(k-i)}\big) .
\]
When evaluating the above formula, we shall of course take into account the Koszul
sign convention. For instance, if $\beta \in \Nat(2)$ and $a,b,\in A$,
then 
\[
\delta(\beta)(a\ot b) =  \Delta \beta(a\ot b)- (-1)^{|\beta|}
  \beta\big(\Delta(a)\ot b\big)- (-1)^{|a| + |\beta|} 
\beta \big(a \ot
\Delta (b)\big) .
\]
It is simple to verify that $\delta^2 = 0$, so 
$\big(\Nat(k)^*,\delta\big)$ is a cochain complex. We leave as an
exercise to describe $\delta$ in terms of trees. 

\begin{proposition}
\label{Jarka_je_na_chalupe_s_M1}
The cochain complex
\begin{equation}
\label{eq:3}
\bfk = \Nat(k)^0 \stackrel \delta\longrightarrow \Nat(k)^{1} \stackrel
\delta\longrightarrow \Nat(k)^{2} \stackrel \delta\longrightarrow
\cdots
\end{equation}
is acyclic for each $k \geq 2$. 
In particular, the map $\delta : \bfk = \Nat(k)^0 \to
\Nat(k)^{1}$  is monic.
\end{proposition}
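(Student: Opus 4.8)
The plan is to work entirely inside the tree model of Corollary~\ref{sec:algebra-frrada-x1k}, under which $\Nat(k) \cong \Span(\Tr(k))$, and to show that the differential turns this into a direct sum of elementary acyclic complexes. First I would reinterpret $\delta$ combinatorially. A basis element is a tree $T \in \Tr(k)$ carrying bullets on all of its internal edges and on some subset of its $k+1$ \emph{external} edges (the $k$ leaf edges and the single root edge), with degree $|T|$ equal to the total number of bullets. Reading off $\delta(\beta) = \Delta\beta - (-1)^{|\beta|}\sum_i \beta(\id^{\otimes(i-1)}\otimes\Delta\otimes\id^{\otimes(k-i)})$, the term $\Delta\beta$ puts a new bullet on the root edge and each summand of the sum puts a new bullet on a leaf edge; since $\Delta^2=0$, any term that would bullet an already-bulleted external edge vanishes. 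Thus $\delta$ is the signed sum, over currently unbulleted external edges, of ``add a bullet there,'' and the coefficient attached to the root-edge operation is $+1$.

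The key structural observation is that $\delta$ never changes the underlying planar tree, its vertices, or its leaf labels: it only enlarges the external bullet pattern. Hence $\delta$ respects the decomposition $\Nat(k) = \bigoplus_s C_s$, where $s$ runs over \emph{shapes} (a planar tree with at least binary vertices, a labeling of leaves by a permutation of $1,\dots,k$, and all internal edges bulleted) and $C_s$ is spanned by the trees obtained from $s$ by bulleting an arbitrary subset of its $k+1$ external edges. Fixing a shape $s$ with $c$ internal edges, $C_s$ is, up to the degree shift by $c$, exactly the exterior (Koszul) complex $\bigl(\bigwedge^\bullet\bfk^{k+1}, w\wedge-\bigr)$ on the $k+1$ external edges, where $w$ is the signed sum of the external-edge generators prescribed above, with root-edge coefficient $+1$.

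It remains to prove each $C_s$ is acyclic, and this is exactly where $k\geq 2$ is used. For $k\geq 2$ every shape has at least one vertex, so the root edge is genuinely distinct from all $k$ leaf edges and $w$ therefore has a nonzero component along the root generator. I would then exhibit the contracting homotopy explicitly: let $h$ strip the outer differential, i.e. $h(T) = \pm(T$ with its root bullet deleted$)$ when the root edge is bulleted, and $h(T)=0$ otherwise (this is contraction along the root-edge covector). Splitting into the two cases ``root bulleted'' and ``root unbulleted,'' one checks that the leaf-bulleting contributions to $\delta h(T)$ and $h\delta(T)$ cancel in pairs while the root contribution survives, yielding $\delta h + h\delta = \id$ on the nose; abstractly this is the anticommutation of $w\wedge-$ with the root contraction together with $\langle\epsilon_{\mathrm{root}}, w\rangle = 1$.

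Since each $C_s$ is acyclic, so is $\Nat(k)$, and in particular $H^0 = \ker(\delta\colon \Nat(k)^0 \to \Nat(k)^1) = 0$, which is the asserted injectivity. I expect the only real friction to be the sign bookkeeping---tracking both the $(-1)^{|\beta|}$ in $\delta$ and the leaf-permutation Koszul signs carefully enough to confirm that $h$ carries the sign making $\delta h + h\delta$ equal $+\id$ rather than cancel. Finally, I would note that the case $k=1$ is genuinely excluded: there the root edge and the unique leaf edge coincide, so the two ``add a bullet'' operations target the same edge with opposite signs, forcing $w=0$ and $\delta=0$, and the complex $\bfk\xrightarrow{0}\bfk$ is not acyclic.
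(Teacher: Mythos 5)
Your proof is correct and uses essentially the same argument as the paper: the contracting homotopy that deletes the bullet on the root edge (and vanishes when the root edge is unbulleted). Your refinement --- splitting $\Nat(k)$ into Koszul complexes $C_s$ indexed by shapes and identifying $h$ as contraction against the root covector --- is a clean way of carrying out the verification of $h\delta+\delta h=\id$ that the paper leaves as an exercise, and your explanation of why $k=1$ is excluded matches the paper's remark.
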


The case of $k=1$ is a particular one, as the differential $\delta :
\Nat(1)^0 \to \Nat(1)^1$ is the zero map $\Span(\id) \stackrel 0\to
\Span(\Delta)$. The explanation is that the
identity $\id : A \to A$ is the only natural operation that is
`generically' a chain map. 

\begin{proof}[Proof of Proposition~\ref{Jarka_je_na_chalupe_s_M1}] 
We describe a contracting homotopy.  By
Corollary~\ref{sec:algebra-frrada-x1k}, each natural operation $\beta
\in \Nat(k)$ is represented by a unique linear combination of trees
from $\Tr(k)$. It is therefore enough to specify how the homotopy acts
on operations given by a single tree. 

Let $\beta$ be represented by
$T \in \Tr(k)$. If the root edge of $T$ is decorated by the bullet, we
define $h(\beta)$ as the operation represented the tree $T'$ obtained
from $T$ by removing the decoration of the root.   
We define $h(\beta) := 0$ if the root edge of $T$ is not
decorated by the bullet.  We leave as an
exercise to verify that $h \delta + \delta h = \id$, so that $h$ is
a contracting homotopy.
\end{proof}

Notice that the complex $\Nat(k)$ is isomorphic to the 
direct sum of $k!$ copies of the contractible subcomplex $\uNat(k)
\subset \Nat(k)$
consisting of operations represented by trees with leaves indexed
by the identity permutation $(1,\ldots,k)$.

\begin{exercise}
\label{sec:acyclicity}
Let $\cNat(k)$ be the abelian group of families of linear maps $\beta_{A} : 
\otexp Ak \to A$ such that the diagram~(\ref{eq:16}) commutes for all
{\em homogeneous\/} $\varphi$'s.\footnote{The notation $\cNat(k)$ refers
  to \underline{c}olored natural operations.}Prove that then
\[
\cNat(k) \cong \bigoplus_{d_1,\ldots, d_k} \cNat(d_1,\ldots, d_k),
\]
where $\cNat(d_1,\ldots, d_k)$ consists of $\beta_A$'s such that 
\[
\beta_A(\Rada a1k) \not= 0\ \mbox { only if } \
(|a_1|,\ldots,|a_k|) = (d_1,\ldots, d_k).
\]
Prove, moreover, that $\cNat(d_1,\ldots, d_k) \cong \Nat(k)$ for each
$d_1,\ldots, d_k$, and discuss the acyclicity of $\cNat(k)$.
\end{exercise}

\subsection{Natural \Ainfty-algebras}
\label{sec:natur-ainfty-algebr}
Let $(A,\Delta)$ be a graded associative algebra with a
differential. We call an \Ainfty-algebra ${\mathcal A}
=(A,m_1,m_2,\ldots)$ {\em natural\/} if $m_k : \otexp Ak \to A$ are
natural operations from $\Nat(k)^1$ for each $k \geq 1$.  
Formally, a natural \Ainfty-algebra should be considered as a family
$\{{\mathcal A}_{(A,\Delta)}\}$ of \Ainfty-algebras indexed by
algebras with a differential, such that each $\varphi$ as
in~(\ref{jsem_unaveny}) induces a {\em strict\/} morphism ${\mathcal
A}_{(A,\Delta_A)} \to {\mathcal A}_{(B,\Delta_B)}$. We however believe
that our simplification will not lead to confusion.

An automorphism $\phi : \Tc A \to \Tc A$ is {\em natural\/} if all its
components $\phi_k : \otexp Ak \to A$ are natural operations from
$\Nat(k)^0$. Natural automorphisms with $\phi_1 = \id_A$ form a monoid
$\Aut(A)$.  It is clear that the twisting of a natural \Ainfty-algebra
by a natural automorphism is a natural \Ainfty-algebra.

\begin{example}
\label{snad-se-mi-neotoci-vitr}
It follows from Example~\ref{Vcera_na_zavodech_v_Tabore_6ty} that
natural automorphisms  $\phi \in \Aut(A)$ are encoded by
sequences $(\id_A,\omega_2,\omega_3,\ldots)$ of elements $\omega_k \in
\bfk[\Sigma_k]$. One has an important submonoid $\uAut \subset
\Aut(A)$ of sequences such that all $\omega_k$'s are the identities
$\id_{\Sigma_k}$. 
\end{example}

\begin{exercise}
\label{sec:natur-ainfty-algebr-1}
Assume that $\Delta$ is a derivation. Then the twisting of
$\calA_\Delta = (A,\Delta,0,0,\ldots)$ by an arbitrary natural
automorphism is {\em strictly\/} isomorphic to $\calA_\Delta$,
i.e.~$\Delta^\phi = \Delta$.
\end{exercise}

\vskip .3em

\noindent 
{\bf Commutative associative case.}
We are going to formulate commutative versions of the main statements from the
first part of this section. We omit the proofs which
are analogous to the non-commutative case. 

Let $A$ be a graded commutative associative algebra with a differential $\Delta$.
Natural operations are, analogously to
Definition~\ref{hh}, natural transformations $\beta_{A} : 
\Sym^k A \to A$ from the $k$th symmetric power of $A$ to $A$. Let us
denote by $\Nat(k)$ the abelian group of all these natural operations. To see
how $\Nat(k)$ differs from its non-commutative counterpart, we give
commutative versions of Examples~\ref{sec:natural-operations}
and~\ref{Vcera_na_zavodech_v_Tabore_6ty}.

\begin{example}[Commutative version of Example~\ref{sec:natural-operations}]
\label{Prijel_jsem-na-kole}
The space $\Nat(1)$ is, as in the non-commutative case, spanned by the
identity $\id :A \to A$ in degree $0$ and $\Delta :A \to A$ in degree~$1$.
The space $\Nat(2)^0$ is spanned by the multiplication $a \odot b \mapsto
ab$, with $\odot$ denoting the symmetric product.  
The space $\Nat(2)^1$ is two-dimensional, 
spanned by the operations
\[
a \odot b \mapsto \Delta(a)b + \zn{|a||b|}  \Delta(b)a \
\mbox { and }\
a \odot b \mapsto \Delta(ab).
\]
Likewise, $\Nat(2)^2$ is spanned by
\[
a \odot b \mapsto \Delta(a)\Delta(b) \ \mbox { and } a \odot 
b \mapsto \Delta\big(\Delta(a)b\big) - \zn{|a||b|}\Delta\big(\Delta(b)a\big).
\]
Finally, $\Nat(2)^3$ is spanned by
\[
a\odot b \mapsto \Delta\big(\Delta(a)\Delta(b)\big).
\]
The Euler characteristic of the graded space $\Nat(2)^*$
is $1-2+2-1 = 0$, so the acyclicity can be expected as in the
non-commutative case.
\end{example}

\begin{example}[Commutative version of
Example~\ref{Vcera_na_zavodech_v_Tabore_6ty}] 
\label{Vcera_na_zavodech_v_Tabore_6ty_bis}
The space
$\Nat(k)^0$ is one-dimen\-sional, spanned by the iterated multiplication
$
\mu_k(\Rada a1k) = a_1\cdots a_k,
$
so~$\Nat(k)^0 \cong \bfk$ for each $k \geq 1$.
\end{example}

An obvious modification of
Proposition~\ref{pojedu_Jarce_zalit_kyticky} holds, with
$\Fr(x_1,\ldots,x_k)$ this time the free {\em commutative\/}
associative algebra. Corollary~\ref{sec:algebra-frrada-x1k} holds as
well, with $\Tr(k)$ replaced by the space of all `abstract,'
i.e.~non-planar, trees. As in the non-commutative case, $\Delta$
induces a differential $\delta$ so that $(\Nat(k)^*,\delta)$ is
acyclic for each $k \geq 2$.

The notions of a natural \Linfty-algebras and natural automorphisms
$\phi : \Sc(A) \to \Sc(A)$ translate verbatim. The following example
however shows that the space $\Aut(A)$ of natural automorphisms is
much smaller than in the non-commutative case.

\begin{example}
\label{snad-se-mi-neotoci-vitr-bis}
The description of $\Nat(k)^0$ given in 
Example~\ref{Vcera_na_zavodech_v_Tabore_6ty_bis} implies that
natural automorphisms  $\phi \in \Aut(A)$ are encoded by
sequences $(\id_A,f_2,f_3,\ldots)$ of scalars $f_k \in
\bfk$.
\end{example}

\section{Main results}
\label{sec:main-results-5}

We are going to formulate and prove the main theorems. As
in Section~\ref{sec:naturality}, we treat in detail only the
associative non-commutative case.

\vskip .3em

\noindent 
{\bf Associative case.}  Let $A$ be a graded associative algebra with
a differential $\Delta$, and $\calA_\Delta = (A,\Delta,0,0,\ldots)$
the trivial \Ainfty-algebra of
Example~\ref{Vcera_mi_Jarunka_volala_kdyz_jsem_byl_v_Libni}. According
to the following result, each natural \Ainfty-algebra whose
linear operation $m_1$ equals $\Delta$, is uniquely given by a
twisting of $\calA_\Delta$, see \S\ref{sec:natur-ainfty-algebr} and
Definition~\ref{V_nedeli_se_Jarunka_vrati} for the meaning of
naturality and twisting. In particular, each such
an \Ainfty-algebra is
(weakly) isomorphic to $\calA_\Delta$.

\begin{subequations}
\begin{theorem}
\label{sec:main-results}
For each natural \Ainfty-algebra $\calA = (A,m_1,m_2,m_3,\ldots)$ such that
\begin{equation}
\label{eq:1}
m_1 = \Delta  
\end{equation}
there exist a {\em unique\/} natural 
automorphism $\phi = (\id_A,\phi_2,\phi_3,\ldots)$ of the
tensor coalgebra $\Tc A$ such that $\calA$ equals the twisting
of $\calA_\Delta$ via $\phi$. Explicitly
\[
m_k(a_1,\ldots,a_k) = \pi \phi^{-1}\Delta \phi(a_1,\ldots,a_k), \
\mbox { for }
\Rada ank \in A,
\]
where $\pi : \Tc A \to A$ is the canonical projection.
The \Ainfty-algebra $\calA = (A,m_1,m_2,m_3,\ldots)$ satisfies also
\begin{equation}
\label{eq:2}
m_2(a_1,a_2) = \Delta(a_1a_2) - \zn{|a_1|}
a_1\Delta(a_2) - \Delta(a_1)a_2,\  a_1,a_2 \in A,  
\end{equation}
if and only if the $\phi_2$-part of the automorphism
$\phi$ equals the product of $A$, i.e.~$\phi_2(a,b) = ab$ for each
$a,b \in A$. 
\end{theorem}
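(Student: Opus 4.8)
The plan is to build the components $\phi_2,\phi_3,\ldots$ one at a time by induction on~$k$, at each stage invoking the acyclicity of $\bigl(\Nat(k)^*,\delta\bigr)$ from Proposition~\ref{Jarka_je_na_chalupe_s_M1} both to produce $\phi_k$ and to see that it is forced. The prescription $\phi_1=\id_A$ together with~(\ref{eq:1}) is the base case, since it gives $\Delta^\phi_1=\Delta=m_1$. Assume inductively that natural operations $\phi_2,\ldots,\phi_{k-1}$, with $\phi_j\in\Nat(j)^0$, have been determined uniquely so that the truncated automorphism $\phi^{(k-1)}:=(\id_A,\phi_2,\ldots,\phi_{k-1},0,0,\ldots)$ satisfies $\Delta^{\phi^{(k-1)}}_j=m_j$ for every $j<k$.

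For the inductive step I would isolate in the twisting formula~(\ref{musim_napsat_zapis}) the part of $\Delta^\phi_k$ that is linear in the still-unknown $\phi_k$. Solving~(\ref{opet_jsem_podlehl}) recursively shows $(\phi^{-1})_r$ depends only on $\phi_1,\ldots,\phi_r$; since $i_1+\cdots+i_r=k$ forces $r\le k$, the component $\phi_k$ can enter~(\ref{musim_napsat_zapis}) in just two ways --- directly through the lone summand with $r=1,\ i_1=k$, giving $\Delta\phi_k$, and through $(\phi^{-1})_k$ in the summand with $r=k$ and all $i_\ell=1$, where $(\phi^{-1}\phi)_k=0$ yields $(\phi^{-1})_k=-\phi_k+(\mbox{terms in }\phi_{<k})$. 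Collecting these contributions produces exactly
\[
\Delta\phi_k-\sum_{1\le i\le k}\phi_k\bigl(\otexp{\id}{i-1}\ot\Delta\ot\otexp{\id}{k-i}\bigr)=\delta(\phi_k),
\]
the equality holding because $\phi_k$ has degree~$0$. Hence $\Delta^\phi_k=\delta(\phi_k)+R_k$, where $R_k:=\Delta^{\phi^{(k-1)}}_k$ gathers all terms built from $\phi_2,\ldots,\phi_{k-1}$ alone, and the target equation $m_k=\Delta^\phi_k$ becomes the linear equation $\delta(\phi_k)=m_k-R_k$ in $\Nat(k)$. Its right-hand side lies in $\Nat(k)^1$: indeed $m_k$ does by naturality of $\calA$, and $R_k$ does because twisting a natural structure by a natural automorphism is natural.

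The crux is to verify that $m_k-R_k$ is a $\delta$-cocycle, for only then does acyclicity apply. Here I would invoke the master identity~(\ref{eq:8}) for $n=k$ --- relabelling the summation indices of~(\ref{eq:8}) as $p,q$ to avoid the clash with the fixed~$k$ --- for two \Ainfty-algebras at once: $\calA$ itself, and the twisting $\Delta^{\phi^{(k-1)}}$, which is an \Ainfty-algebra because $\phi^{(k-1)}$ is a genuine automorphism. In either case the summands carrying the linear operation (those with $p=1$ or $q=1$) assemble, under the Koszul convention, into $\Delta x_k+\sum_i x_k(\otexp{\id}{i-1}\ot\Delta\ot\otexp{\id}{k-i})=\delta(x_k)$, with $x_k$ standing for $m_k$ and for $R_k=\Delta^{\phi^{(k-1)}}_k$ respectively, while the remaining summands form the quadratic expression $\sum_{p+q=k+1,\ p,q\ge2}\sum_i x_p(\otexp{\id}{i-1}\ot x_q\ot\otexp{\id}{p-i})$ in operations of arity $<k$. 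Each identity therefore reads $\delta(x_k)=-\sum(\cdots)$; since $m_j=\Delta^{\phi^{(k-1)}}_j$ for all $j<k$ by the inductive hypothesis, the two quadratic right-hand sides coincide verbatim, so $\delta(m_k)=\delta(R_k)$ and $m_k-R_k$ is a cocycle. This reconciliation of the two master identities, with its attendant Koszul-sign bookkeeping, is the one genuinely delicate point I anticipate.

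Acyclicity now closes the induction: exactness of~(\ref{eq:3}) at $\Nat(k)^1$ supplies a $\phi_k\in\Nat(k)^0$ with $\delta(\phi_k)=m_k-R_k$, and injectivity of $\delta\colon\Nat(k)^0\to\Nat(k)^1$ makes it unique. The assembled $\phi=(\id_A,\phi_2,\phi_3,\ldots)$ is thus the unique natural automorphism with $\calA$ the twisting of $\calA_\Delta$ by $\phi$, and the displayed formula $m_k=\pi\phi^{-1}\Delta\phi$ is Definition~\ref{V_nedeli_se_Jarunka_vrati} read componentwise. Finally, for the equivalence I would specialize to $k=2$, where $R_2=\Delta^{\id}_2=0$ and hence $m_2=\delta(\phi_2)$. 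A one-line evaluation gives $\delta(\mu)(a\ot b)=\Delta(ab)-\Delta(a)b-\zn{|a|}a\Delta(b)$ for the product $\mu\in\Nat(2)^0$, $\mu(a,b)=ab$, which is precisely the right-hand side of~(\ref{eq:2}). So~(\ref{eq:2}) amounts to $\delta(\phi_2)=\delta(\mu)$, and by injectivity of $\delta$ on $\Nat(2)^0$ this holds if and only if $\phi_2=\mu$, i.e.\ $\phi_2(a,b)=ab$.
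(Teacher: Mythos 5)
Your proof is correct, and it rests on the same key lemma as the paper --- the acyclicity of $\big(\Nat(k)^*,\delta\big)$ from Proposition~\ref{Jarka_je_na_chalupe_s_M1} --- but it organizes the induction genuinely differently. The paper does not build $\phi$ component by component against a fixed target; it successively conjugates the structure itself: assuming an automorphism $\vartheta$ with $(\vartheta m\vartheta^{-1})_j=0$ for $1<j\leq n$, it observes that for the reduced structure $(A,\Delta,0,\ldots,0,n_{n+1},\ldots)$ the master identity~(\ref{eq:8}) in arity $n+1$ gives $\delta(n_{n+1})=0$ in one line, because all quadratic terms vanish automatically (the intermediate components are zero), and then kills $n_{n+1}$ by conjugating with $\alpha=(\id_A,0,\ldots,0,\alpha_{n+1},\ldots)$ where $\delta(\alpha_{n+1})=n_{n+1}$; the automorphism $\phi$ thus arises as an infinite product of such $\alpha$'s. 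Your fixed-target version instead solves $\delta(\phi_k)=m_k-R_k$ directly, which is exactly what forces the delicate step you flagged: establishing $\delta(m_k)=\delta(R_k)$ by comparing the master identities of $\calA$ and of the partial twist $\Delta^{\phi^{(k-1)}}$ --- a comparison the paper's normalization trick renders unnecessary. In exchange, your route buys three things: an explicit linear equation determining each $\phi_k$ (hence an algorithm), a uniqueness argument folded into the same induction via monicity of $\delta$ on $\Nat(k)^0$ (the paper proves uniqueness separately, setting $\omega=\phi\psi^{-1}$, noting $\omega$ commutes with $\Delta$, and showing inductively $\omega_k=0$ by the same monicity), and an explicit verification of the equivalence~(\ref{eq:2}) $\Leftrightarrow$ $\phi_2=\mu$ via $m_2=\delta(\phi_2)$ and $\delta(\mu)(a\ot b)=\Delta(ab)-\Delta(a)b-\zn{|a|}a\Delta(b)$, a point the paper's written proof leaves implicit. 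Your bookkeeping around~(\ref{musim_napsat_zapis}) is sound: since all $i_\ell\geq 1$, the component $\phi_k$ indeed enters only through the summand $r=1$, $i_1=k$ and through $(\phi^{-1})_k=-\phi_k+(\mbox{terms in }\phi_{<k})$ at $r=k$, yielding precisely $\delta(\phi_k)$ in degree $0$, and $\Delta^\phi_j$ depends only on $\phi_2,\ldots,\phi_j$, so the truncations are consistent.
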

\end{subequations}

\begin{proof}
Let $m$ denote the coderivation of $\Tc A$ determined by
$(m_1,m_2,m_3,\ldots)$.  Assume that we have already constructed an
automorphism $\vartheta = (\id_A,\vartheta_2,\vartheta_3,\ldots)$ of
$\Tc A$ such that
\[
(\vartheta m \vartheta^{-1})_1= \Delta\ \mbox { and }\
(\vartheta m \vartheta^{-1})_k= 0 ,\ \mbox { for } 1 < k  \leq n,
\] 
with some $n \ge 1$. To simplify the notation, denote $n :=\vartheta
m \vartheta^{-1}$. The coderivation $n$ determines an
\Ainfty-structure of the form
$(A,\Delta,0,\ldots,0,n_{n+1},\ldots)$. 
Axiom~(\ref{eq:8}) for $n+1$ implies that $\delta(n_{n+1}) = 0$. 
Consider any automorphism $\alpha$ of
$\Tc A$ of the form $\alpha =
(\id_A,0,\ldots,0,\alpha_{n+1},\ldots)$. Clearly,
$(\alpha n\alpha^{-1})_1 = \Delta$,
$(\alpha n\alpha^{-1})_k = 0$ for $1< k \leq n$ and
\[
(\alpha n \alpha^{-1})_{n+1} = n_{n+1} +
\sum_{1 \leq j \leq n+1}\alpha_{n+1}(\id_A^{\ot(j-1)}\ot \Delta \ot
\id_A^{\ot(n-j+1)})
-\Delta \alpha_{n+1} = n_{n+1} - \delta(\alpha_{n+1}).
\]   
Since $\delta(n_{n+1}) = 0$, by
Proposition~\ref{Jarka_je_na_chalupe_s_M1} one finds
$\alpha_{n+1}$  such that $ n_{n+1} = \delta(\alpha_{n+1})$. 
With this choice, $(\alpha n\alpha^{-1})_{n+1} =
0$, thus $\phi' := \alpha\phi$ satisfies
\[
(\phi' m \phi'^{-1})_1= \Delta\ \mbox { and }\
(\phi' m \phi'^{-1})_k= 0 ,\ \mbox { for } 1 < k  \leq n+1.
\] 
This shows that we can inductively 
construct an automorphism $\phi$ of $\Tc A$ such that
$\phi m  \phi^{-1} = \Delta$ or, equivalently, $m =   \phi^{-1} \Delta
\phi$. The first part of the theorem is proven.

To demonstrate that the twisting automorphism is unique, 
assume that $m = \phi^{-1} \Delta \phi = 
\psi^{-1} \Delta \psi$. Then $\omega := \phi\psi^{-1}$ 
satisfies $\omega^{-1} \Delta \omega = \Delta$. For the bilinear
part $\omega_2$ of $\omega$ this gives
\[
\omega_2 (\Delta \ot \id_A) +\omega_2 (\id_A \ot \Delta) - \Delta \omega_2
= 0, 
\]
i.e.~$\delta(\omega_2)=0$. Since $\omega \in \Nat(k)^0$, $\omega_2 =
0$ by Proposition~\ref{Jarka_je_na_chalupe_s_M1}. In the same vein
we prove inductively that $\omega_k = 0$ for all $k \geq 2$, therefore
$\omega = \id : \Tc A \to \Tc A$ so $\phi = \psi$.
\end{proof}

Theorem~\ref{sec:main-results} combined with 
Example~\ref{snad-se-mi-neotoci-vitr} gives:

\begin{corollary}
\label{sec:main-results-6}
Natural \Ainfty-algebras satisfying~(\ref{eq:1})
are parametrized by power series
\begin{equation}
\label{eq:9}
\phi(t) := t + \omega_2t^2 + \omega_3t^3 + \omega_4 t^4 +\cdots
\end{equation}
with $\omega_k \in \bfk[\Sigma_k]$, $k \geq 2$. Natural \Ainfty-algebras
satisfying also~(\ref{eq:2}) are parametrized by
expressions~(\ref{eq:9}) with $\omega_2$  the identity permutation
$\id_{\Sigma_2}$.
\end{corollary}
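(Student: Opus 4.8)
The plan is to obtain the corollary by combining Theorem~\ref{sec:main-results} with the description of $\Nat(k)^0$ from Example~\ref{Vcera_na_zavodech_v_Tabore_6ty}; all the substantive work---the inductive construction of the twisting automorphism and its uniqueness---has already been done in the proof of that theorem, so what remains is purely a matter of bookkeeping.

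First I would invoke Theorem~\ref{sec:main-results} to identify the natural \Ainfty-algebras $\calA = (A,m_1,m_2,\ldots)$ satisfying~(\ref{eq:1}) with the natural automorphisms $\phi = (\id_A,\phi_2,\phi_3,\ldots)$ of $\Tc A$: the theorem supplies a well-defined assignment $\calA \mapsto \phi$ and asserts its injectivity (the uniqueness clause), while the twisting $\phi \mapsto \calA_\Delta^\phi = (A,\phi^{-1}\Delta\phi)$ furnishes the inverse, naturality being preserved because the twist of a natural \Ainfty-algebra by a natural automorphism is again natural (see \S\ref{sec:natur-ainfty-algebr}). It therefore suffices to parametrize the natural automorphisms $\phi$ with $\phi_1 = \id_A$.

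Next I would recall that $\phi$ is an automorphism of $\Tc A$ if and only if $\phi_1$ is invertible, which is automatic here since $\phi_1 = \id_A$; hence the higher components are subject to no further constraint and may be chosen freely and independently. A natural automorphism with $\phi_1 = \id_A$ is thus precisely an arbitrary choice of $\phi_k \in \Nat(k)^0$ for each $k \geq 2$, and by Example~\ref{Vcera_na_zavodech_v_Tabore_6ty} one has $\Nat(k)^0 \cong \bfk[\Sigma_k]$, so each $\phi_k$ is recorded by an element $\omega_k \in \bfk[\Sigma_k]$ (this is exactly the content of Example~\ref{snad-se-mi-neotoci-vitr}). Encoding the sequence $(\omega_2,\omega_3,\ldots)$ as the formal power series $\phi(t) = t + \omega_2 t^2 + \omega_3 t^3 + \cdots$ then establishes the first assertion. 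For the second, I would use the final clause of Theorem~\ref{sec:main-results}, by which~(\ref{eq:2}) holds exactly when $\phi_2$ equals the product of $A$; under $\Nat(2)^0 \cong \bfk[\Sigma_2]$ this product corresponds to $\id_{\Sigma_2}$, so~(\ref{eq:2}) is equivalent to $\omega_2 = \id_{\Sigma_2}$, as claimed.

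Since every ingredient is already available, I do not anticipate a genuine obstacle. The one point deserving a moment of care is the observation that the coalgebra-automorphism condition imposes nothing beyond the invertibility of $\phi_1$, so that the group-algebra components $\omega_k$ really are free parameters; without this remark the asserted parametrization could be doubted.
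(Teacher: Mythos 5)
Your proposal is correct and follows exactly the route the paper intends: the corollary is stated as an immediate consequence of Theorem~\ref{sec:main-results} together with the description of natural automorphisms in Example~\ref{snad-se-mi-neotoci-vitr} (i.e.\ $\Nat(k)^0\cong\bfk[\Sigma_k]$ from Example~\ref{Vcera_na_zavodech_v_Tabore_6ty}), and you have simply spelled out the bookkeeping the paper leaves implicit. The only detail you add beyond the paper --- that the automorphism condition imposes no constraint beyond invertibility of $\phi_1$, so the $\omega_k$ are genuinely free --- is a worthwhile observation and entirely consistent with the intended argument.
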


A {\em weak\/} isomorphism of \Ainfty-algebras induces a {\em
strict\/} isomorphism of their cohomology algebras. We therefore get
another

\begin{Corollary}
The cohomology $H^*(A,\Delta)$ of any natural \Ainfty-algebra
$(A,\Delta,m_2,m_3,\ldots)$ is the {\em trivial\/} associative algebra with
the underlying space $H^*(A,\Delta)$.
\end{Corollary}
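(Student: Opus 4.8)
The plan is to deduce this directly from Theorem~\ref{sec:main-results} together with the principle, recalled immediately above the statement, that a weak isomorphism of \Ainfty-algebras induces a strict isomorphism of the associated cohomology algebras. So essentially no new computation is needed: the corollary is a formal consequence of the structure theorem.

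I would begin by recalling the relevant piece of \Ainfty-formalism: the cohomology algebra of an \Ainfty-algebra $(A,m_1,m_2,m_3,\ldots)$ is $H^*(A,m_1)$ equipped with the product induced by $m_2$. The \Ainfty-axioms~(\ref{eq:8}) for $n=2$ say that $m_2$ is a chain map for $m_1$, so it descends to $H^*(A,m_1)$, while the axiom for $n=3$ exhibits its associativity up to the $m_1$-homotopy $m_3$; hence the induced product is a genuine associative multiplication on cohomology. The key special case is the trivial \Ainfty-algebra $\calA_\Delta = (A,\Delta,0,0,\ldots)$: here the only nonzero structure operation is $m_1=\Delta$, there is no binary operation, and therefore the induced product on $H^*(A,\Delta)$ is the zero product. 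Thus the cohomology algebra of $\calA_\Delta$ is the trivial associative algebra on the space $H^*(A,\Delta)$.

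Next I would apply Theorem~\ref{sec:main-results} to the given natural \Ainfty-algebra $\calA=(A,\Delta,m_2,m_3,\ldots)$: since $m_1=\Delta$, there is a natural automorphism $\phi$ of $\Tc A$ with $\phi_1=\id_A$ such that $\calA$ is the twisting of $\calA_\Delta$ by $\phi$, i.e.\ $\phi m\phi^{-1}=\Delta$. In particular $\phi$ is a weak isomorphism $\calA\to\calA_\Delta$, and its linear part $\phi_1=\id_A$ induces the identity of $H^*(A,\Delta)$. Transporting the product along this strict isomorphism of cohomology algebras, the product induced by $m_2$ on $H^*(A,\Delta)$ must coincide with the product of $\calA_\Delta$, which is zero. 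Hence $H^*(A,\Delta)$ is the trivial associative algebra, as claimed.

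I do not expect a genuine obstacle. The only points deserving care are purely bookkeeping: confirming that the induced product on $H^*$ of $\calA_\Delta$ is literally the zero product (immediate from the absence of a binary operation), and noting that the transport is carried by the \emph{identity} $\phi_1=\id_A$, so that the conclusion identifies the underlying space of the cohomology algebra as exactly $H^*(A,\Delta)$ with the trivial multiplication.
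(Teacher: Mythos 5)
Your proposal is correct and follows exactly the paper's route: the paper derives this corollary in one line from Theorem~\ref{sec:main-results} together with the remark that a weak isomorphism of \Ainfty-algebras induces a strict isomorphism of cohomology algebras, and your argument merely spells out the (correct) bookkeeping that the cohomology algebra of $\calA_\Delta$ carries the zero product and that the transport is along $\phi_1=\id_A$.
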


The observation made in Example~\ref{sec:natur-ainfty-algebr-1}
combined with Theorem~\ref{sec:main-results} lead
to another

\begin{Corollary}
Assume that $\Delta$ is a derivation of $A$. Then any natural
\Ainfty-braces vanish on $A$. 
\end{Corollary}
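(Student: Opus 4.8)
The plan is to deduce this immediately from the structure theorem together with the exercise that precedes it. First I would spell out what is being claimed: a natural \Ainfty-brace on $(A,\Delta)$ is a higher structure operation $m_k$ ($k \geq 2$) of a natural \Ainfty-algebra $\calA = (A,m_1,m_2,m_3,\ldots)$ whose linear part is the given differential, i.e.~satisfying~(\ref{eq:1}). By Theorem~\ref{sec:main-results}, any such $\calA$ is the twisting of the trivial \Ainfty-algebra $\calA_\Delta = (A,\Delta,0,0,\ldots)$ by a unique natural automorphism $\phi$ of $\Tc A$, so that $m_k = \Delta^\phi_k$ for every $k$.

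Next I would invoke Example~\ref{sec:natur-ainfty-algebr-1}: since $\Delta$ is assumed to be a derivation, the twisting of $\calA_\Delta$ by \emph{any} natural automorphism is strictly isomorphic to $\calA_\Delta$, which is precisely the statement $\Delta^\phi = \Delta$. Reading this equality of coderivations off componentwise gives $\Delta^\phi_1 = \Delta$ and $\Delta^\phi_k = 0$ for all $k \geq 2$. Combined with $m_k = \Delta^\phi_k$ from the previous step, this forces $m_k = 0$ for every $k \geq 2$, i.e.~all the natural \Ainfty-braces vanish, as asserted.

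I expect no serious obstacle, because the two ingredients have already done the work: Theorem~\ref{sec:main-results} guarantees that every natural \Ainfty-structure extending $\Delta$ is a twisting, and Example~\ref{sec:natur-ainfty-algebr-1} records that a derivation is unchanged by twisting. Should one wish to avoid quoting the exercise, the only point needing verification is that the linear coderivation $(\Delta,0,0,\ldots)$ commutes with every natural automorphism $\phi$ when $\Delta$ is a derivation; indeed each component $\phi_k \in \Nat(k)^0$ is an iterated product (up to a permutation of the inputs), across which $\Delta$ passes by the Leibniz rule, so that $\Delta\phi = \phi\Delta$ on $\Tc A$ and hence $\Delta^\phi = \phi^{-1}\Delta\phi = \Delta$. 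Inspecting formula~(\ref{musim_napsat_zapis}) (or its $\uAut$-specialization~(\ref{v_pondeli_mne_snad_Jaruska_zachrani})) and watching the Leibniz contributions cancel is the only mildly computational step, and it is exactly the content already set as the exercise.
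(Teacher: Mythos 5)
Your proof is correct and follows exactly the paper's route: the author obtains this corollary by combining Theorem~\ref{sec:main-results} with the observation of Exercise~\ref{sec:natur-ainfty-algebr-1}, precisely the two ingredients you cite. Your closing verification that $\Delta^\phi=\Delta$ via the Leibniz rule applied to the components $\phi_k\in\Nat(k)^0$ is the intended content of that exercise, so nothing is missing.
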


We finally formulate our characterization of \Boj\ braces~\cite{Boj}
recalled in Example~\ref{Jarka_mi_vcera_volala}.

\begin{theorem}
\label{sec:main-results-1}
B\"orjeson's braces $(A,\Delta,b^\Delta_2,b^\Delta_3,b^\Delta_4,\ldots)$ 
are the unique, up to a {\em strict\/}
isomorphism, natural recursive \Ainfty-braces defined over ${\mathbb Z}$ 
such that
\begin{itemize}
\item[(i)]
$b^\Delta_2$ measures the deviation of $\Delta$ from being a derivation, i.e.
\[
b^\Delta_2(a_1,a_2) = \Delta(a_1a_2) -  \Delta(a_1)a_2 -
\zn{|a_1|}a_1\Delta(a_2), \  
a_1,a_2 \in A,  
\]
\item[(ii)]
the coefficient at $\Delta(a_1a_2a_3)$ in $b^\Delta_3(a_1,a_2,a_3)$ is either $+1$ or
$-1$ and,
\item[(iii)]
the hereditarity is satisfied, that is for all $k \geq 1$,
\[
b^\Delta_k = 0 \ \mbox { implies } \ b^\Delta_{k+1} = 0.
\]
\end{itemize}
\end{theorem}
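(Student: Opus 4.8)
\emph{Existence.} B\"orjeson's braces are manifestly natural, have integer coefficients, and the coefficient $C_k$ at $\Delta(a_1\cdots a_k)$ equals $+1$ for every $k$, so they are recursive over $\mathbb Z$ in the sense of~(\ref{eq:17}); conditions~(i) and~(ii) then hold by inspection of Example~\ref{Jarka_mi_vcera_volala}, and hereditarity~(iii) is recorded there. The latter is in fact transparent from the closed formula, since one verifies directly (Koszul signs suppressed) the inductive identity
\[
b^\Delta_{k+1}(a_1,\dots,a_{k+1}) = b^\Delta_k(a_1,\dots,a_{k-1},a_ka_{k+1}) - b^\Delta_k(a_1,\dots,a_k)\,a_{k+1},\qquad k\ge2,
\]
which shows that $b^\Delta_k\equiv0$ forces $b^\Delta_{k+1}\equiv0$ (the case $k=1$ being trivial as $b^\Delta_1=\Delta$). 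By Example~\ref{Jaruska_mozna_prijede_ve_ctvrtek_do_Prahy} these braces are the twisting of $\calA_\Delta$ by $\phi(t)=t/(1-t)$, and this is the representative singled out by the theorem.

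\emph{Uniqueness: reduction to the twisting datum.} Let $(A,\Delta,m_2,m_3,\dots)$ be any natural \Ainfty-braces as in the statement. By Theorem~\ref{sec:main-results} there is a unique natural automorphism $\phi=(\id_A,\omega_2,\omega_3,\dots)$ with $m=\phi^{-1}\Delta\phi$, and by Example~\ref{snad-se-mi-neotoci-vitr} each component lies in $\omega_k\in\bfk[\Sigma_k]=\Nat(k)^0$. The second half of Theorem~\ref{sec:main-results} turns~(i) into $\omega_2=\mu_2$. The plan is to show the remaining hypotheses force $\omega_k=\mu_k$ for all $k$, i.e.\ $\phi(t)=t/(1-t)$. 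The engine is the following reading of~(\ref{musim_napsat_zapis}): since $\phi_1=\id_A$, inverting $\phi$ gives $(\phi^{-1})_k=-\omega_k+(\text{terms in }\omega_{<k})$, and the top component $\omega_k$ therefore enters $m_k$ only through the two summands $\Delta\omega_k$ and $-\sum_j\omega_k(\id^{\ot(j-1)}\ot\Delta\ot\id^{\ot(k-j)})$, whose sum is exactly $\delta(\omega_k)$ for the differential $\delta$ of Proposition~\ref{Jarka_je_na_chalupe_s_M1}. Consequently, if $\omega_j=\mu_j$ for all $j<k$ then $m_j=b^\Delta_j$ for $j<k$ and
\[
m_k=b^\Delta_k+\delta(\eta_k),\qquad \eta_k:=\omega_k-\mu_k\in\Nat(k)^0 .
\]

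\emph{Uniqueness: recursivity and hereditarity.} Recursivity over $\mathbb Z$ controls the coefficient $C_k$: the term $\Delta(a_1\cdots a_k)$ occurs in $\delta(\eta_k)$ only through the identity permutation of $\eta_k$, the remaining summands of $\delta$ applying $\Delta$ to a single argument, so $C_k=1+(\text{identity coefficient of }\eta_k)$ must lie in $\{\pm1\}$; this is precisely~(ii) at $k=3$. It remains to kill $\eta_k$ completely, and here the decisive input is hereditarity~(iii), read as the requirement that $m_{k+1}$ vanish on every $(A,\Delta)$ on which $m_k$ vanishes identically. I would attack this by exhibiting, for each hypothetically nonzero $\eta_k$, a test algebra on which the perturbed braces $b^\Delta+\delta\eta$ make $m_k$ vanish identically while $m_{k+1}$ does not, contradicting~(iii) and forcing $\eta_k=0$.

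\emph{The main obstacle.} The genuine difficulty is exactly this hereditarity analysis, because the levels do not decouple: the vanishing locus $\{m_k\equiv0\}$ itself depends on the lower discrepancies $\eta_2,\dots,\eta_{k-1}$, and the first nontrivial instance is even vacuous, since on any algebra where $\Delta$ is a derivation (equivalently $m_2=b^\Delta_2=0$) every natural twisting collapses to $\Delta^\phi=\Delta$, so all higher braces vanish automatically and impose nothing on $\eta_3$. Hence hereditarity cannot be used one level at a time and must be understood globally. I expect the cleanest route is to prove first that hereditarity forces $\phi$ to lie in the submonoid $\uAut$ — so that $\omega_k=f_k\mu_k$ carries no genuine permutations and the data reduce to the generating series $\phi(t)=t+f_2t^2+\cdots$ — and to characterize hereditary braces by a functional equation on $\phi(t)$; then recursivity (here $C_k=f_k\in\{\pm1\}$), together with~(i) ($f_2=1$) and~(ii), pins down $\phi(t)=t/(1-t)$. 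Establishing that functional-equation characterization of hereditary natural braces is the step I anticipate to be hard, and is where the deeper structural analysis of the space of natural operations must enter.
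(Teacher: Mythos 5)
Your existence argument and your reduction of uniqueness to the twisting datum are sound and match the paper's setup: the paper likewise writes $m_{k}-b^\Delta_{k}=\delta z_{k}$ with $z_{k}\in\Nat(k)^0$ (obtained there from the \Ainfty-axiom plus the acyclicity of Proposition~\ref{Jarka_je_na_chalupe_s_M1} rather than from expanding~(\ref{musim_napsat_zapis}), but the two derivations are equivalent), and your inductive identity for $b^\Delta_{k+1}$ is a correct variant of the recursion in Remark~\ref{sec:main-results-2}. The problem is that the two steps which actually kill the discrepancies $\eta_k$ are exactly the ones you leave open, so the uniqueness half is not proved. First, at $k=3$ your analysis stops at $C_3=1+\xi_{\id}\in\{\pm1\}$, which still allows $\xi_{\id}=-2$ and says nothing about the non-identity components of $\eta_3$. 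The paper's Proposition~\ref{sec:main-results-4} closes this by using hereditarity one level up, at $3\to4$ (not the vacuous $2\to3$ instance you correctly worry about): it parametrizes the admissible $m_3$ by $\alpha$ with $C_3=1+\alpha$, writes out the forced $m'_4$, and requires it to be a $\bfk$-linear combination of compositions of $m_3$ with the product; the resulting linear system~(\ref{Dnes_slavime_s_Jarkou_narozeniny.}) works for $\alpha=0$, while for $\alpha=-2$ its unique solution has coefficients $\tfrac12,-\tfrac32\notin\mathbb Z$, which is where ``defined over $\mathbb Z$'' enters. Your sketch never rules out $C_3=-1$ nor the permutation part of $\eta_3$.

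Second, for $k\ge4$ you say you ``would attack this by exhibiting a test algebra'' but do not exhibit one; this is the actual content of Proposition~\ref{sec:main-results-3}. The paper's choice is $A_{n+1}=\Fr(\Rada x1{n+1})/I_{n+1}$ with $I_{n+1}$ generated by the image of $m_n=b^\Delta_n$: hereditarity makes $m_{n+1}$ and $b^\Delta_{n+1}$, hence $\delta z_{n+1}$, vanish there, and the decisive point is that $I_{n+1}$ imposes no relation involving the monomials $\Delta(x_{\sigma(1)})x_{\sigma(2)}\cdots x_{\sigma(n+1)}$ (because $b^\Delta_n$ for $n\ge3$ contains no term $\Delta(a_1)a_2\cdots a_n$), so reading off those coefficients forces every $\xi_\sigma=0$. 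Note this is a strictly level-by-level use of hereditarity, so your pessimism that ``hereditarity cannot be used one level at a time'' applies only at the bottom level, which Proposition~\ref{sec:main-results-4} handles separately. Your proposed alternative route --- first showing hereditarity forces $\phi\in\uAut$ and then characterizing hereditary braces by a functional equation on $\phi(t)$ --- is not established (you flag it as the hard step yourself), is not needed in the paper's argument, and as stated would have to be phrased only up to strict isomorphism, since e.g.\ the order-reversed \Boj\ braces are hereditary with $\omega_k\notin\Span(\id_{\Sigma_k})\cdot\mu_k$ before normalization by~(i).
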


It is obvious that the B\"orjeson braces are recursive and 
satisfy~(i) and~(ii). Their
hereditarity established in~\cite{Boj} follows from an inductive
formula mentioned in Remark~\ref{sec:main-results-2}. It remains to
prove that~(i)--(iii) characterize \Boj's braces up to a strict
isomorphism. This will follow from
Propositions~\ref{sec:main-results-3} and~\ref{sec:main-results-4}
below.

\begin{proposition}
\label{sec:main-results-3}
Suppose that $A = (A,\Delta,m_2,m_3,m_4,\ldots)$ are natural
hereditary \Ainfty-braces such that $m_2 = b^\Delta_2$ and $m_3 =
b^\Delta_3$. Then $m_k = b^\Delta_k$ for any $k \geq 2$.
\end{proposition}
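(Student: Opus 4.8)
The plan is to argue by induction on $k$, the cases $k=2,3$ being exactly the hypotheses. So assume $m_j=b^\Delta_j$ for all $2\le j\le n$ with $n\ge 3$, put $d:=m_{n+1}-b^\Delta_{n+1}\in\Nat(n+1)^1$, and aim to prove $d=0$. First I would feed the \Ainfty-axiom~(\ref{eq:8}) of arity $n+1$ into the comparison. Isolating the two extreme summands, those carrying $m_1=\Delta$, from the rest, that axiom reads
\[
\delta(m_{n+1})\;=\;-\!\!\sum_{\substack{k+l=n+2\\ 2\le k,l\le n}}\ \sum_{1\le i\le k} m_k\big(\id^{\ot(i-1)}\ot m_l\ot\id^{\ot(k-i)}\big),
\]
since the extreme terms combine precisely into $\delta(m_{n+1})$. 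As \Boj's braces themselves form an \Ainfty-algebra (Example~\ref{Jaruska_mozna_prijede_ve_ctvrtek_do_Prahy}), the identical relation holds with every $m$ replaced by $b^\Delta$. The two right-hand sides involve only operations of arity between $2$ and $n$, which agree by the inductive hypothesis, so subtracting gives $\delta(d)=0$. Hence $d$ is a degree-$1$ cocycle, and the acyclicity of $\Nat(n+1)^\bullet$ (Proposition~\ref{Jarka_je_na_chalupe_s_M1}) produces a \emph{unique} $\gamma\in\Nat(n+1)^0\cong\bfk[\Sigma_{n+1}]$ with $d=\delta(\gamma)$, uniqueness because $\delta$ is monic on $\Nat(n+1)^0$.

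It is worth identifying $\gamma$ concretely before using heredity. Applying the contracting homotopy $h$ of Proposition~\ref{Jarka_je_na_chalupe_s_M1}, which strips the decoration of a root edge when present, and using $h\delta+\delta h=\id$ together with $h(\gamma)=0$ (as $\gamma$ carries no $\Delta$), one gets $\gamma=h(d)$. Since the only root-decorated term of $b^\Delta_{n+1}$ is $\Delta(a_1\cdots a_{n+1})$, this means that $\gamma$ records exactly the discrepancy between the top terms $\Delta(a_{\sigma(1)}\cdots a_{\sigma(n+1)})$, $\sigma\in\Sigma_{n+1}$, occurring in $m_{n+1}$ and the single such term of $b^\Delta_{n+1}$. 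Thus $\gamma=0$ is equivalent to $m_{n+1}$ and $b^\Delta_{n+1}$ having the same top part.

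Next I would invoke hereditarity. Because $m_n=b^\Delta_n$, the two families cut out the \emph{same} class $J$ of pairs $(A,\Delta)$ on which the $n$-th brace vanishes identically, namely those for which $\Delta$ is an order-$(n-1)$ derivation. Hereditarity of $m$ (assumed) and of $b^\Delta$ (Example~\ref{Jarka_mi_vcera_volala}) forces both $m_{n+1}$ and $b^\Delta_{n+1}$ to vanish on $J$, so the coboundary $d=\delta(\gamma)$ vanishes on every member of $J$.

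The main obstacle is the final step: deducing $\gamma=0$ from the vanishing of $\delta(\gamma)$ on $J$. Acyclicity alone is insufficient, as it only places $d$ in the image of $\delta$; one must show that this image meets the operations vanishing on $J$ only in $0$. My plan is to test $\delta(\gamma)$ on explicit members of $J$: take the free graded associative algebra on degree-$0$ generators $x_1,\dots,x_{n+1}$ truncated so that words of length $>n+1$ vanish, and equip it with an order-$(n-1)$ derivation $\Delta$ prescribed generically on the generators and on monomials of length $\le n-1$ (the order condition then determines $\Delta$ on the length-$n$ and length-$(n+1)$ words). Evaluating on degree-$0$ generators kills all Koszul signs, and on such a pair one can arrange the values $\Delta(x_{\sigma(1)}\cdots x_{\sigma(n+1)})$ to be linearly independent, so that the vanishing of $\delta(\gamma)$ reads off each coefficient of $\gamma$ as zero. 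This yields $\gamma=0$, hence $d=0$ and $m_{n+1}=b^\Delta_{n+1}$, closing the induction. The delicate point, which I would control through the tree model of Corollary~\ref{sec:algebra-frrada-x1k}, is to verify that neither the truncation nor the order-$(n-1)$ constraint introduces linear relations among exactly the top terms that $\gamma$ detects.
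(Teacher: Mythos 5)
Your overall architecture coincides with the paper's: induction on $k$, the \Ainfty-axiom in arity $n+1$ giving $\delta(m_{n+1}-b^\Delta_{n+1})=0$, acyclicity (Proposition~\ref{Jarka_je_na_chalupe_s_M1}) producing a unique $\gamma\in\Nat(n+1)^0\cong\bfk[\Sigma_{n+1}]$ with $m_{n+1}-b^\Delta_{n+1}=\delta(\gamma)$, and hereditarity forcing $\delta(\gamma)$ to vanish on any algebra on which the $n$-th brace vanishes. Your identification $\gamma=h(d)$ via the contracting homotopy is a correct and pleasant observation not made explicitly in the paper. The paper's version of your last step uses the universal test object, namely $\Fr(\Rada x1{n+1})$ modulo the ideal generated by the values of $m_n$, rather than a generically prescribed order-$(n-1)$ derivation on a truncated free algebra; these are essentially the same device.

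The genuine gap is in the decisive final step, which you flag but do not execute, and whose sketch aims at the wrong monomials. Writing $\gamma=\sum_\sigma\xi_\sigma\,a_{\sigma(1)}\cdots a_{\sigma(n+1)}$, one has
\[
\delta(\gamma)(\Rada x1{n+1})=\sum_\sigma\xi_\sigma\Big(\Delta(x_{\sigma(1)}\cdots x_{\sigma(n+1)})-\sum_{i}x_{\sigma(1)}\cdots\Delta(x_{\sigma(i)})\cdots x_{\sigma(n+1)}\Big),
\]
and you propose to read off $\xi_\sigma$ from the top terms $\Delta(x_{\sigma(1)}\cdots x_{\sigma(n+1)})$ after arranging them to be linearly independent. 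But on \emph{every} algebra of your class $J$ these top terms are exactly the elements that get rewritten: the relations $b^\Delta_n=0$ and $b^\Delta_{n+1}=0$ express each $\Delta(x_{\sigma(1)}\cdots x_{\sigma(n+1)})$ as a combination of monomials $u\,\Delta(w)\,v$ with $|w|\leq n-1$, and for $n=3$ this expansion reaches $|w|=1$ and so mixes with the second sum above (e.g.\ $\Delta(x_1x_2x_3x_4)$ expands to $\Delta(x_1x_2)x_3x_4+x_1\Delta(x_2x_3)x_4+x_1x_2\Delta(x_3x_4)-x_1\Delta(x_2)x_3x_4-x_1x_2\Delta(x_3)x_4$). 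Hence mutual independence of the top terms neither holds in the relevant sense nor suffices to exclude cancellation; you would be left solving the full linear system. The paper's resolution is to isolate instead the monomials $\Delta(x_{\sigma(1)})x_{\sigma(2)}\cdots x_{\sigma(n+1)}$: each occurs in $\delta(\gamma)$ with coefficient exactly $-\xi_\sigma$, and one checks from the explicit form of $m_n=b^\Delta_n$ that no relation coming from the vanishing of the $n$-th brace involves these monomials, so their coefficients must vanish individually, giving $\gamma=0$. This is also precisely where the hypothesis $m_3=b^\Delta_3$ (equivalently $n\geq 3$) is essential: for $n=2$ the monomial $\Delta(x_1)x_2x_3$ \emph{does} occur in $m_2(x_1,x_2)x_3$, the argument collapses, and the base case has to be handled separately (Proposition~\ref{sec:main-results-4}) — a point your write-up does not surface.
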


\begin{proof}
Assume we have already proved that
\[
b^\Delta_k = m_k \ \mbox { for } 2 \leq k \leq n,
\]
with some $n \geq 3$. The \Ainfty-axiom (\ref{eq:8}) taken with $n = k+1$
implies that $\delta(m_{n+1}) = \delta(b^\Delta_{n+1})$, i.e.~
$\delta(m_{n+1} -b^\Delta_{n+1}) = 0$. By
Proposition~\ref{Jarka_je_na_chalupe_s_M1} there exists $z_{n+1} \in
\Nat(n+1)^0$ such that 
\begin{equation}
\label{eq:14}
m_{n+1} -b^\Delta_{n+1} = \delta z_{n+1}. 
\end{equation}
Consider the free associative algebra $\Fr(\Rada x1{n+1})$
on degree $0$ variables $\Rada x1{n+1}$. Let $A_{n+1}$ be $\Fr(\Rada
x1{n+1})$ quotiented by the ideal $I_{n+1}$ generated by
\begin{equation}
\label{eq:13}
m_n(\Rada a1n) \ \mbox { for } \Rada a1n \in \Fr(\Rada x1{n+1}).
\end{equation}
Then both $b^\Delta_{n+1}$ and $m_{n+1}$ vanish on $A_{n+1}$, since both
braces are hereditary. By~(\ref{eq:14}), $\delta z_{n+1}$ must vanish
on $A_{n+1}$, too. In particular,  $\delta z_{n+1}(\Rada x1{n+1}) =
0$.\footnote{As customary, we denote both the generators of $\Fr(\Rada
  x1{n+1})$ and their equivalence classes in $A_{n+1}$ by the same symbols.} 
We are going to prove that this implies that $z_{n+1} =
0$, so  $m_{n+1}  = b^\Delta_{n+1}$ again by~(\ref{eq:14}).

It follows from the description of $\Nat(n+1)^0$ given in
Example~\ref{Vcera_na_zavodech_v_Tabore_6ty_bis} that
\[
z_{n+1}(a_1,\ldots,a_{n+1}) = \sum_{\sigma \in \Sigma_{n+1}} 
\xi_\sigma a_{\sigma(1)} a_{\sigma(1)}
\cdots a_{\sigma(n+1)}, 
\]
with some $\xi_\sigma \in \bfk$,
therefore
\begin{eqnarray}
\label{eq:11}
\lefteqn{
\delta z_{n+1}(x_1,\ldots,x_{n+1}) =} 
\\ 
\nonumber 
&&\sum_{\sigma \in \Sigma_{n+1}} 
\xi_\sigma \big(\Delta(x_{\sigma(1)}
\cdots x_{\sigma(n+1)}) - \sum_{1 \leq i \leq n+1}
x_{\sigma(1)}
\cdots \Delta(x_{\sigma(i)}) \cdots x_{\sigma(n+1)}\big).
\end{eqnarray}

The crucial observation is that modding out by the ideal $I_{n+1}$
generated by~(\ref{eq:13}) {\em does not\/} introduce any relations
involving the monomials
\begin{equation}
\label{Za_chvili_s_Jarkou_k_Pakousum.}
\Delta(x_{\sigma(1)}) 
x_{\sigma(2)}\cdots x_{\sigma(n+1)}, \ \sigma \in \Sigma_{n+1}.
\end{equation}
Let us show for instance that the degree $1$ part $I^1_{n+1}$ of the ideal
$I_{n+1}$ does not involve the element 
$\Delta(x_1)x_2 \cdots x_{n+1}$. It follows from the
definition of an ideal that the subspace of $I^1_{n+1}$ spanned by words
containing $\Rada x1{n+1}$ in this order consists of linear
combinations of the monomials
\[
x_1m_n(\Rada x2{n+1}),\ m_n(x_1x_2,\ldots, x_{n+1}),\ldots,
 m_n(x_1,\ldots, x_nx_{n+1}),\ m_n(\Rada x1{n})x_{n+1}.
\] 
Looking at the explicit form of $m_n = b^\Delta_n$ we immediately realize that
none of the above terms contains the monomial $\Delta(x_1)x_2 \cdots
x_{n+1}$. The argument for other permutations is similar.

Inspecting the coefficients at the
terms~(\ref{Za_chvili_s_Jarkou_k_Pakousum.}) in~(\ref{eq:11}), we see
that $\delta z_{n+1}=0$  in $A_{n+1}$ only if all
$\xi_\sigma$, $\sigma \in \Sigma_{n+1}$, are
trivial. So $z_{n+1} = 0$ as required, and the induction goes on.
\end{proof}

In the proof of Proposition~\ref{sec:main-results-3}, the requirement
that $n \geq 3$ in~(\ref{eq:14}) was crucial. Indeed, the monomial
$\Delta(x_1)x_2x_3$ {\em does\/} occur in $m_2(x_1,x_2)x_3$, so the argument
following formula~(\ref{Za_chvili_s_Jarkou_k_Pakousum.}) does not
work for $n=2$. We must therefore prove also the following

\begin{proposition}
\label{sec:main-results-4}
Let $\calA = (A,\Delta,m_2,m_3,m_4,\ldots)$ be a natural recursive \Ainfty-algebra
such that $m_2=b^\Delta_2$, the coefficient $C_3$ at $\Delta(a_1a_2a_3)$ in
$m_3(a_1,a_2,a_3)$ is either $1$ or $-1$, and $m_3 = 0$ implies
$m_4=0$. Then in fact $C_3 = 1$ and $m_3 = b^\Delta_3$. 
\end{proposition}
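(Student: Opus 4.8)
The plan is to pin down $m_3$ using the acyclicity of $\big(\Nat(3)^*,\delta\big)$ and then to let the single hereditarity implication $m_3 = 0\Rightarrow m_4 = 0$ eliminate the remaining ambiguity, in the same spirit as the proof of Proposition~\ref{sec:main-results-3} but one level lower. First I would determine $m_3$ up to a coboundary. The \Ainfty-axiom~(\ref{eq:8}) with $n=3$ reads
\[
\delta(m_3) = -\big(m_2(m_2\ot\id)+m_2(\id\ot m_2)\big),
\]
and its right-hand side depends only on $m_2 = b^\Delta_2$. Since B\"orjeson's $b^\Delta_3$ from Example~\ref{Jarka_mi_vcera_volala} obeys the very same equation, $\delta(m_3-b^\Delta_3)=0$, and the acyclicity furnished by Proposition~\ref{Jarka_je_na_chalupe_s_M1} produces $z_3\in\Nat(3)^0$ with $m_3 = b^\Delta_3 + \delta z_3$. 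By Example~\ref{Vcera_na_zavodech_v_Tabore_6ty}, $\Nat(3)^0\cong\bfk[\Sigma_3]$, so $z_3 = \sum_{\sigma\in\Sigma_3}\xi_\sigma\,a_{\sigma(1)}a_{\sigma(2)}a_{\sigma(3)}$. Expanding $\delta z_3$, the only contribution of the shape $\Delta(a_1a_2a_3)$ comes from the identity permutation, whence $C_3 = 1+\xi_{\id}$; the recursivity hypothesis~(\ref{eq:17}), i.e.\ $C_3\in\{+1,-1\}$, then forces $\xi_{\id}\in\{0,-2\}$. It thus remains to show that hereditarity annihilates every $\xi_\sigma$ and rules out the value $\xi_{\id}=-2$.

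To bring hereditarity to bear I would realise it universally: form the free algebra $\Fr(x_1,\ldots,x_4)$ on degree $0$ generators, let $I$ be the two-sided ideal generated by all values $m_3(p,q,r)$, and set $A_4:=\Fr(x_1,\ldots,x_4)/I$. On $A_4$ the brace $m_3$ vanishes identically, so $m_3 = 0\Rightarrow m_4 = 0$ gives $m_4(x_1,x_2,x_3,x_4)\in I$. On the other hand the \Ainfty-axiom~(\ref{eq:8}) with $n=4$ expresses $\delta(m_4)$ through $m_2$ and $m_3=b^\Delta_3+\delta z_3$; subtracting the analogous relation satisfied by the hereditary $b^\Delta_4$ and applying acyclicity once more writes $m_4 = b^\Delta_4 - w(z_3) + \delta z_4$, with $w(z_3)$ linear in $z_3$ and $z_4\in\Nat(4)^0$ free. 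Restricted to the multilinear one-$\Delta$ part, the membership $m_4(x_1,x_2,x_3,x_4)\in I$ becomes a system of equations in the $\xi_\sigma$ obtained by matching coefficients against the generators of $I$, namely the mergings $m_3(\ldots x_ix_j\ldots)$ and the one-sided products $x_i\,m_3(\cdots)$ and $m_3(\cdots)\,x_i$.

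The heart of the matter, and the step I expect to be the main obstacle, is this coefficient comparison, for two reasons. Unlike in Proposition~\ref{sec:main-results-3}, the would-be protected monomials $\Delta(x_{\sigma(1)})x_{\sigma(2)}x_{\sigma(3)}x_{\sigma(4)}$ are no longer protected, since $b^\Delta_3$ already contains the one-variable term $a_1\Delta(a_2)a_3$; this is exactly the failure, noted after Proposition~\ref{sec:main-results-3}, that makes the low case special, so I must isolate a finer family of monomials whose coefficients depend triangularly on the $\xi_\sigma$. Moreover, because the ideal $I=(b^\Delta_3+\delta z_3)$ itself moves affinely with $z_3$ while $m_4$ is only affine in $z_3$, the membership is genuinely quadratic in $z_3$; it is precisely this nonlinearity that should make the system inconsistent when $\xi_{\id}=-2$ (the value $C_3=-1$), yet solvable only with all $\xi_\sigma = 0$ when $\xi_{\id}=0$. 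The hereditary but non-recursive braces of Example~\ref{pojedu-proti-vetru}, which have $m_2=b^\Delta_2$ and $C_3=0$, confirm that hereditarity alone cannot determine $m_3$ and that the recursivity constraint $C_3\in\{+1,-1\}$ is indispensable for isolating the B\"orjeson point. Carrying out the comparison should yield $z_3=0$, hence $C_3=1$ and $m_3 = b^\Delta_3$.
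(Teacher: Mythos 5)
Your opening moves are sound and agree with the paper's: the $n=3$ instance of~(\ref{eq:8}) together with Proposition~\ref{Jarka_je_na_chalupe_s_M1} does give $m_3 = b^\Delta_3 + \delta z_3$ with $z_3\in\Nat(3)^0\cong\bfk[\Sigma_3]$, and $C_3 = 1+\xi_{\id}$, so recursivity forces $\xi_{\id}\in\{0,-2\}$. (In the paper this appears in the equivalent form of the one-parameter family~(\ref{eq:18}), with $\alpha$ playing the role of your $\xi_{\id}$; the non-identity components of $z_3$ are suppressed there because the assumptions reduce the twisting automorphism to $(\id_A,\mu,C_3\mu_3,\ldots)$.) But your proof stops exactly where the work begins: the coefficient comparison that is supposed to kill $z_3$ is only announced, and you yourself flag it as the step you cannot yet carry out. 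The paper does it by brute force: it writes the ordered part $m'_4$ of $m_4$ explicitly in terms of $\alpha$ and a second parameter $\beta$ (your $z_4$), requires $m'_4$ to be the combination~(\ref{Za_chvili_s_Jaruskou_u_Mexicanu}) of $m_3$'s with coefficients $A,\ldots,E$, and solves the linear system~(\ref{Dnes_slavime_s_Jarkou_narozeniny.}).

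More seriously, your predicted mechanism for excluding $C_3=-1$ is wrong. You expect the resulting system to be \emph{inconsistent} when $\xi_{\id}=-2$. It is not: over the field $\bfk$ it has a unique solution, $A=C=1/2$, $B=0$, $D=E=-3/2$, so the $\alpha=-2$ braces are perfectly hereditary over $\bfk$, and no amount of coefficient-matching over a field of characteristic zero can rule them out. What excludes them is that this unique solution is non-integral, so that $m_3=0\Rightarrow m_4=0$ fails over $\mathbb{Z}$ (and in particular over $\mathbb{Z}/2\mathbb{Z}$); you must invoke the ``defined over $\mathbb{Z}$'' clause of recursivity~(\ref{eq:17}) at precisely this point, not merely the condition $C_3\in\{-1,+1\}$ that you extracted from it. Without that integrality argument your strategy cannot close, and the ``quadratic in $z_3$'' nonlinearity you hope will produce a contradiction does not in fact produce one.
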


\begin{proof}
Boring tour de force. It follows from the assumptions that
the automorphism $\phi$ inducing $\calA$ must be of
the form $\phi = (\id_A,\mu,C_3\mu_3,\ldots)$. This means that $a_1,a_2,a_3$
appear in $m_3$ only in this order.  
It is simple to prove that $m_3$'s with this property form 
the family:
\begin{equation}
\label{eq:18}
\begin{aligned}
m_3(a_1,a_2,a_3) =\ & (1+\alpha) \Delta(a_1a_2a_3) - \alpha\big(
(\Delta(a_1)a_2a_3 + \sgn{a_1 + a_2} a_1a_2\Delta(a_3)\big)
\\
&+(1-\alpha) \sgn{a_1} a_1\Delta(a_2)a_3 -(\Delta(a_1a_2)a_3 + 
\sgn{a_1}a_1\Delta(a_2a_3)\big),
\end{aligned}
\end{equation}
depending on a parameter $\alpha \in \bfk$.
We did not write the signs explicitly, because they
equal the Koszul sign of the permutation of
$(\Delta,a_1,a_2,a_3)$ in the corresponding term. For
instance, $\varepsilon$ at the last term in the first line means
$|a_1| + |a_2|$.

The operation $m_4$ decomposes into the sum $m'_4 + m''_4$, where $m'_4$ is the
part that contains $a_1,a_2,a_3,a_4$ in this order.
The $m'_4$-part is parametrized by $\alpha \in \bfk$ as above
and another parameter $\beta \in \bfk$ by
\begin{align*}
m'_4(a_1,a_2,a_3,a_4) =\ & (1+\beta)\Delta(a_1a_2a_3a_4)
+(2\alpha-\beta)\big(\Delta(a_1)a_2a_3a_4 +\sgn{a_1 + a_2+a_3}  
a_1a_2a_3\Delta(a_4)\big)
\\
&+(4\alpha-\beta)\big(\sgn{a_1}  a_1\Delta(a_2)a_3a_4 + 
\sgn{a_1 + a_2}  a_1a_2\Delta(a_3)a_4\big)
\\
&- (1+\alpha)\big(\Delta(a_1a_2a_3)a_4 +
\sgn{a_1}a_1\Delta(a_2a_3a_4)\big)
\\
&- \alpha\big(\Delta(a_1a_2)a_3a_4 + \sgn{a_1 + a_2}  a_1a_2
\Delta(a_3a_4)\big)
+(1-\alpha) \sgn{a_1} a_1 \Delta(a_2a_3)a_4.  
\end{align*}
If the vanishing of $m_3$ implies the vanishing of $m_4$, then
$m'_4$ must equal the linear combination
\begin{equation}
\label{Za_chvili_s_Jaruskou_u_Mexicanu}
\begin{aligned}
A m_3(a_1a_2,&a_3,a_4) + B  m_3(a_1,a_2a_3,a_4) +C
m_3(a_1,a_2,a_3a_4)
\\
&+ Da_1m_3(a_2,a_3,a_4) + E m_3(a_1,a_2,a_3)a_4
\end{aligned}
\end{equation}
with some $A,\ldots,E \in \bfk$. Expanding $m_3$'s in the above
display gives the system:
\begin{equation}
\begin{aligned}
\label{Dnes_slavime_s_Jarkou_narozeniny.}
 (1+\beta) &= (1+\alpha)(A+B+C),
\\
(2\alpha -\beta) &= -\alpha (B+C+E)= -\alpha (A+B+D),
\\
(4\alpha - \beta) &=  (1 -\alpha)C - \alpha D  - (\alpha-1) E =
 (1 -\alpha)A - \alpha E  - (\alpha-1) D,
\\
\alpha &= \alpha A + C + E= \alpha C + A + D,
\\
(1-\alpha) &= (1-\alpha) B - D -E,\ \mbox { and }
\\
(\alpha+1) &= A+B - (\alpha+1)E = B+C - (\alpha+1)D.
\end{aligned}
\end{equation}

The case $C_3=1$ corresponds to $\alpha=0$ by~(\ref{eq:18}). The 2nd
equation of~(\ref{Dnes_slavime_s_Jarkou_narozeniny.}) immediately
gives that $\beta=0$, so the corresponding $m_2$ and $m_3$
equal the B\"orjeson braces $b^\Delta_2$ and~$b^\Delta_3$ as claimed. 

The $C_3=-1$ case happens when $\alpha = -2$. One may verify
that then the system~(\ref{Dnes_slavime_s_Jarkou_narozeniny.}) has the
{\em unique\/} solution, namely 
\[
\alpha = \beta = -2,\ A = C = 1/2, B=0 \mbox { and } D=E = -3/2.
\]
Therefore, the only way how to express $m'_4$ as a linear
combination~(\ref{Za_chvili_s_Jaruskou_u_Mexicanu}) is 
\[
\begin{aligned}
m'_4(a_1,a_2,a_3,a_4)& = 
\\
\frac12\big( m_3(a_1&a_2,a_3,a_4) +m_3(a_1,a_2,a_3a_4)\big)
-\frac32 \big(a_1m_3(a_2,a_3,a_4) + m_3(a_1,a_2,a_3)a_4\big),
\end{aligned}
\]
so $m_3 = 0$ does not imply $m_4=0$ {\em over ${\mathbb Z\/}$}. This
excludes this case.
\end{proof}

\begin{remark}
\label{sec:main-results-2}
In the $C_3=1$ case of the above proof, the solutions of the
system~(\ref{Dnes_slavime_s_Jarkou_narozeniny.}) form a 2-parametric
family depending on $A,B \in \bfk$:
\[
\alpha = \beta = 0,\ C =  1-(A+B),\ D = -A \mbox { and } E = (A+B)-1.
\]
The particular solution with $B=1$ and all other parameters trivial
gives rise to the `canonical' recursion
\[
b^\Delta_4(a_1,a_2,a_3,a_4) = b^\Delta_3(a_1,a_2a_3,a_4)
\]
that generalizes to all higher \Boj's braces in the obvious way,
proving their hereditarity.
\end{remark}

\begin{example}[Non-hereditary braces]
\label{zitra_prijede_Ronco}
Proposition~\ref{sec:main-results-3} can be used to produce examples
of non-hereditary braces. Recall from
Example~\ref{Jaruska_mozna_prijede_ve_ctvrtek_do_Prahy} 
that the \Boj\ braces are generated by
the automorphism $\phi$ with the generating series 
$\phi(t) := t + t^2 + t^3 + t^4 + \cdots$. Assume
that $\calA = (A,\Delta,m_2,m_3,\ldots)$ is the \Ainfty-algebra given by the
automorphism $\tilde\phi$ with the generating series of the
form
\[
\tilde\phi(t) := t + t^2 + t^3 + \hbox{ higher order terms.}
\] 
Clearly $m_2 = b^\Delta_2,\ m_3 = b^\Delta_3$. By
Proposition~\ref{sec:main-results-3}, $\calA$ is hereditary if and
only if $\phi(t)= \tilde\phi(t)$. So choosing e.g.~$\tilde\phi(t) := t + t^2 +
t^3$ produces non-hereditary braces.
\end{example}

\begin{example}[Super-exotic \Ainfty-braces]
\label{sec:main-results-7}
We present \Ainfty-braces that are natural in the extended sense of
Example~\ref{sec:acyclicity}. They are given, for elements
$a,a_1,a_2,\ldots$ of a 
graded associative algebra $A$ with a differential $\Delta$,~by
\begin{align*}
s^\Delta_1(a) &= \Delta(a),
\\
s^\Delta_2(a_1,a_2) &=
\ctyricases {\Delta(a_1a_2)}{if $(|a_1|,|a_2|) = (0,0)$,}
{-\Delta(a_1)a_2}{if $(|a_1|,|a_2|) = (-1,0)$,}
{-a_1\Delta(a_2)}{if $(|a_1|,|a_2|) = (0,-1)$, and}0{in the remaining
  cases,}
\\
s^\Delta_3(a_1,a_2,a_3) &= \tricases{\Delta(a_1a_2a_3)}{if
  $(|a_1|,|a_2|,|a_3|) = (0,0,0),$}{a_1\Delta(a_2)a_3}{if
  $(|a_1|,|a_2|,|a_3|) = (0,-1,0)$, and}0{in the remaining
  cases,}
\\
&\hskip .5em \vdots  
\\
s^\Delta_k(a_1,\ldots,a_k) &= \cases{\Delta(a_1\cdots a_k)}{if
  $(|a_1|,\ldots,|a_k|) = (0,\ldots,0)$, and}0{in the remaining
  cases,},\ k \geq 4. 
\end{align*}
The above braces are the \Boj\ braces of the couple $(A',\Delta)$,
where $A'$ is the associative algebra with the same underlying space
as $A$ but the multiplication $\cdot'$ defined as
\[
a \cdot' b := \cases{ab}{if $(|a|,|b|) = (0,0)$, and}{0}{in the remaining
  cases.}
\] 

Notice that the above braces are nontrivial even when $\Delta$ is a
derivation! For this reason why we did not consider this kind of
extended naturality.
\end{example}

\vskip .3em

\noindent 
{\bf Commutative associative case.}
The first part of this section translates to
the commutative case in a straightforward manner, so 
we formulate only the commutative versions of the main
theorems. We start with the commutative variant of
Theorem~\ref{sec:main-results}. Recall that $\calL_\Delta$ denotes the
trivial \Linfty-algebra from
Example~\ref{Vcera_mi_Jarunka_volala_kdyz_jsem_byl_v_Libni-bis}.

\begin{subequations}
\begin{theorem}
\label{sec:main-results_bis}
For each natural \Linfty-algebra $\calL = (A,l_1,l_2,l_3,\ldots)$ such that
\begin{equation}
\label{eq:1_bis}
l_1 = \Delta  
\end{equation}
there exist a {\em unique\/} natural 
automorphism $\phi = (\id,\phi_2,\phi_3,\ldots)$ of the
symmetric coalgebra $\Sc A$ such that $\calL$ equals the twisting
of $\calL_\Delta$ via $\phi$. 
The \Linfty-algebra $\calA = (A,l_1,l_2,l_3,\ldots)$ satisfies
\begin{equation}
\label{eq:2_bis}
l_2(a_1,a_2) = \Delta(a_1a_2) - \zn{|a_1|}   a_1\Delta(a_2) - \Delta(a_1)a_2,\  
a_1,a_2 \in A,  
\end{equation}
if and only if $\phi_2$ equals the product of $A$. 
\end{theorem}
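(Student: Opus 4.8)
The plan is to transport the proof of Theorem~\ref{sec:main-results} essentially verbatim to the cocommutative setting, replacing the tensor coalgebra $\Tc A$ by the symmetric coalgebra $\Sc A$, the \Ainfty-axiom~(\ref{eq:8}) by the \Linfty-master identity~(\ref{eq:8bis}), and the non-commutative acyclicity of Proposition~\ref{Jarka_je_na_chalupe_s_M1} by its commutative analog, which the text established holds for the complex $(\Nat(k)^*,\delta)$ for each $k\geq 2$. Throughout, $l$ denotes the square-zero coderivation of $\Sc A$ with components $(l_1,l_2,l_3,\ldots)$; by~(\ref{eq:1_bis}) we have $l_1=\Delta$, and the goal is to produce a natural automorphism $\phi$ with $\phi\, l\, \phi^{-1}=\Delta$, equivalently $l=\phi^{-1}\Delta\phi=\Delta^\phi$, so that $\calL$ is the twisting of $\calL_\Delta$ by~$\phi$.

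For \emph{existence} I would argue by induction exactly as before. Suppose a natural automorphism $\vartheta=(\id,\vartheta_2,\ldots)$ of $\Sc A$ has already been found with $(\vartheta\, l\, \vartheta^{-1})_1=\Delta$ and $(\vartheta\, l\, \vartheta^{-1})_k=0$ for $1<k\leq n$. Writing $l':=\vartheta\, l\, \vartheta^{-1}$, the master identity~(\ref{eq:8bis}) in arity $n+1$ forces $\delta(l'_{n+1})=0$. Conjugating by a natural automorphism of the form $\alpha=(\id,0,\ldots,0,\alpha_{n+1},\ldots)$ affects only the $(n+1)$st component, and the same coderivation bookkeeping as in the associative case gives $(\alpha\, l'\, \alpha^{-1})_{n+1}=l'_{n+1}-\delta(\alpha_{n+1})$. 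Since $l'_{n+1}$ is a $\delta$-cocycle in $\Nat(n+1)^{1}$, the commutative acyclicity furnishes a primitive $\alpha_{n+1}\in\Nat(n+1)^{0}$ with $l'_{n+1}=\delta(\alpha_{n+1})$; this kills the $(n+1)$st component, and replacing $\vartheta$ by $\alpha\vartheta$ the induction proceeds, assembling the desired natural~$\phi$.

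For \emph{uniqueness} I would copy the closing argument of the proof of Theorem~\ref{sec:main-results}: if $l=\phi^{-1}\Delta\phi=\psi^{-1}\Delta\psi$, then $\omega:=\phi\psi^{-1}$ satisfies $\omega^{-1}\Delta\omega=\Delta$. Comparing components inductively yields $\delta(\omega_k)=0$ for each $k\geq 2$ (the lower components being already trivial), and since $\omega_k\in\Nat(k)^{0}$ the commutative acyclicity forces $\omega_k=0$; hence $\omega=\id$ and $\phi=\psi$.

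Finally, for the characterization~(\ref{eq:2_bis}) I would compute the second component directly. Because $\phi_1=\id$ one has $(\phi^{-1})_1=\id$ and $(\phi^{-1})_2=-\phi_2$, so applying the coderivation $(\Delta,0,0,\ldots)$ through $\phi$ by the cocommutative analog of~(\ref{musim_napsat_zapis}) gives, with the Koszul signs,
\[
\Delta^\phi_2(a_1,a_2)=\Delta\phi_2(a_1,a_2)-\phi_2\big(\Delta(a_1),a_2\big)-\zn{|a_1|}\phi_2\big(a_1,\Delta(a_2)\big)=\delta(\phi_2)(a_1,a_2).
\]
By Example~\ref{Vcera_na_zavodech_v_Tabore_6ty_bis} the space $\Nat(2)^{0}$ is one-dimensional, spanned by the product $\mu$, so $\phi_2=f_2\mu$ for a scalar $f_2$ and $l_2=\Delta^\phi_2=f_2\,\delta(\mu)$. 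Since $\delta(\mu)$ is precisely the right-hand side of~(\ref{eq:2_bis}) and is nonzero as a natural operation, the identity~(\ref{eq:2_bis}) holds if and only if $f_2=1$, i.e.~$\phi_2=\mu$. The main obstacle here is conceptual rather than computational: everything rests on the commutative acyclicity statement (the symmetric analog of Proposition~\ref{Jarka_je_na_chalupe_s_M1}), whose verification requires carrying the $\Sigma_k$-action and the Koszul signs through the tree description and the contracting-homotopy argument; once that is granted, the inductive and uniqueness steps—and in particular the identity $(\alpha\, l'\, \alpha^{-1})_{n+1}=l'_{n+1}-\delta(\alpha_{n+1})$ in the cocommutative setting—are formal.
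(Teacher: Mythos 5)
Your proposal is correct and follows exactly the route the paper intends: the paper gives no separate proof of Theorem~\ref{sec:main-results_bis}, stating only that the associative argument of Theorem~\ref{sec:main-results} translates verbatim to the symmetric coalgebra $\Sc A$ using the commutative acyclicity of $(\Nat(k)^*,\delta)$, which is precisely your inductive killing of components and the $\omega=\phi\psi^{-1}$ uniqueness argument. Your added computation $l_2=\Delta^\phi_2=\delta(\phi_2)=f_2\,\delta(\mu)$, using the one-dimensionality of $\Nat(2)^0$ from Example~\ref{Vcera_na_zavodech_v_Tabore_6ty_bis} and the injectivity of $\delta$ on $\Nat(2)^0$, is a correct and welcome verification of the second clause, which the paper leaves implicit.
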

\end{subequations}

Theorem~\ref{sec:main-results_bis} combined with the 
description of natural automorphisms  given in
Example~\ref{snad-se-mi-neotoci-vitr-bis} leads to:

\begin{corollary}
\label{sec:main-results-6_bis}
Natural \Linfty-algebras satisfying~(\ref{eq:1_bis})
are parametrized by power series
\begin{equation}
\label{eq:9_bis}
\phi(t) := t + f_2t^2 + f_3t^3 + f_4 t^4 +\cdots \in \bfk[[t]].
\end{equation}
Natural \Linfty-algebras
satisfying also~(\ref{eq:2_bis}) are parametrized by
series~(\ref{eq:9_bis}) with $f_2 = 1$.
\end{corollary}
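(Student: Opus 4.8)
The plan is to deduce the corollary directly from Theorem~\ref{sec:main-results_bis} and Example~\ref{snad-se-mi-neotoci-vitr-bis}: essentially all the content is already contained in those two statements, and the only remaining work is to check that the correspondence they describe is a genuine bijection and to match the normalizations.

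First I would make precise the bijection underlying the word ``parametrized.'' Theorem~\ref{sec:main-results_bis} assigns to each natural \Linfty-algebra $\calL=(A,\Delta,l_2,l_3,\ldots)$ satisfying~(\ref{eq:1_bis}) a \emph{unique} natural automorphism $\phi=(\id,\phi_2,\phi_3,\ldots)$ of $\Sc A$ with $\calL=\calL_\Delta^\phi$, which gives injectivity. For surjectivity I would observe that $\calL_\Delta$ is a natural \Linfty-algebra and that the twisting of a natural \Linfty-algebra by a natural automorphism is again natural (the commutative analog of the observation in \S\ref{sec:natur-ainfty-algebr}); since $\phi_1=\id$, the linear part of the twisting $\calL_\Delta^\phi$ is still $\Delta$, so it satisfies~(\ref{eq:1_bis}). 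Hence $\phi\mapsto\calL_\Delta^\phi$ inverts the assignment of the theorem, and natural \Linfty-algebras satisfying~(\ref{eq:1_bis}) are in bijection with natural automorphisms $\phi=(\id,\phi_2,\phi_3,\ldots)$ of $\Sc A$.

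Next I would translate the automorphism side into power series and handle the normalization. By Example~\ref{snad-se-mi-neotoci-vitr-bis}, a natural automorphism with $\phi_1=\id$ is encoded by a sequence $(\id_A,f_2,f_3,\ldots)$ of scalars, i.e.~$\phi_k=f_k\mu_k$ with $\mu_k$ the iterated product; recording these scalars as the generating series $\phi(t)=t+f_2t^2+f_3t^3+\cdots$ of~(\ref{eq:9_bis}) gives the first assertion. For the second, the last clause of Theorem~\ref{sec:main-results_bis} says that $\calL$ satisfies~(\ref{eq:2_bis}) exactly when $\phi_2$ equals the product of $A$, i.e.~$\phi_2=\mu_2$; under $\phi_2=f_2\mu_2$ this is precisely the condition $f_2=1$.

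I expect no genuine obstacle, since the substance is carried entirely by Theorem~\ref{sec:main-results_bis}, whose proof parallels that of Theorem~\ref{sec:main-results}. The only point requiring slight care is the surjectivity of the parametrization---that every series~(\ref{eq:9_bis}) actually arises from a natural \Linfty-algebra---which reduces to the naturality of twisting together with the fact, guaranteed by the cofreeness of $\Sc A$, that $\phi$ is automatically an automorphism once $\phi_1=\id$.
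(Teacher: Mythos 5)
Your proposal follows exactly the paper's route: the corollary is stated there as an immediate consequence of Theorem~\ref{sec:main-results_bis} together with the description of natural automorphisms in Example~\ref{snad-se-mi-neotoci-vitr-bis}, which is precisely your argument. The extra care you take with injectivity (uniqueness of $\phi$) and surjectivity (naturality of the twisting, $\phi$ invertible since $\phi_1=\id$) only makes explicit what the paper leaves implicit.
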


The following theorem offers a characterization of Koszul \Linfty-braces
analogous to that of \Boj\ \Ainfty-braces given in
Theorem~\ref{sec:main-results-1}.

\begin{theorem}
\label{sec:main-results-1_bis}
The Koszul braces $(A,\Delta,\Phi^\Delta_2,\Phi^\Delta_3,\Phi^\Delta_4,\ldots)$ 
are the unique natural \Linfty-braces defined over ${\mathbb Z}$ 
such that
\begin{itemize}
\item[(i)]
$\Phi^\Delta_2$ measures the deviation of $\Delta$ from being a derivation, i.e.
\[
\Phi^\Delta_2(a_1,a_2) = \Delta(a_1a_2) -  \Delta(a_1)a_2 -
\zn{|a_1|}a_1\Delta(a_2), \  
a_1,a_2 \in A,  
\]
\item[(ii)]
the coefficient at $\Delta(a_1a_2a_3)$ in $\Phi^\Delta_3(a_1,a_2,a_3)$ is either $+1$ or
$-1$ and,
\item[(iii)]
the hereditarity is satisfied, that is for all $k \geq 1$, that is
\[
\Phi^\Delta_k = 0 \ \mbox { implies } \ \Phi^\Delta_{k+1} = 0.
\]
\end{itemize}
\end{theorem}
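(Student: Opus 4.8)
The plan is to run the commutative analogues of Propositions~\ref{sec:main-results-3} and~\ref{sec:main-results-4}, exactly as in the associative case. First I would invoke Corollary~\ref{sec:main-results-6_bis}: any natural \Linfty-algebra with $l_1 = \Delta$ is the twisting of $\calL_\Delta$ by a unique natural automorphism $\phi$, and hypothesis~(i) is equivalent, via~(\ref{eq:2_bis}), to $\phi_2$ being the product of $A$, i.e.~to $l_2 = \Phi^\Delta_2$. Since the Koszul braces are themselves the twisting by $\phi(t) = e^t-1$ (Example~\ref{Dojde_zitra_Jaruska_na_parnik?}) and manifestly satisfy~(i)--(iii) — their hereditarity being property~(\ref{eq:10bis}) — it remains only to show that~(i)--(iii) pin down $\phi$ uniquely, that is, that $l_k = \Phi^\Delta_k$ for all $k$.

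For the inductive core I would establish the commutative version of Proposition~\ref{sec:main-results-3}: if natural hereditary \Linfty-braces satisfy $l_2 = \Phi^\Delta_2$ and $l_3 = \Phi^\Delta_3$, then $l_k = \Phi^\Delta_k$ for every $k$. Assuming $l_k = \Phi^\Delta_k$ for $2 \le k \le n$ with $n \ge 3$, the master identity~(\ref{eq:8bis}) in arity $n+1$ gives $\delta\big(l_{n+1} - \Phi^\Delta_{n+1}\big) = 0$, so by the commutative acyclicity (the \Linfty-analogue of Proposition~\ref{Jarka_je_na_chalupe_s_M1}) there is a $z_{n+1} \in \Nat(n+1)^0$ with $l_{n+1} - \Phi^\Delta_{n+1} = \delta z_{n+1}$. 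Here the commutative case is genuinely simpler: by Example~\ref{Vcera_na_zavodech_v_Tabore_6ty_bis} the space $\Nat(n+1)^0 \cong \bfk$ is one-dimensional, so $z_{n+1} = c\,\mu_{n+1}$ is a single scalar multiple of the iterated product, with $\delta z_{n+1} = c\big(\Delta(x_1\cdots x_{n+1}) - \sum_i x_1 \cdots \Delta(x_i) \cdots x_{n+1}\big)$. Passing to the quotient $A_{n+1} = \Fr(\Rada x1{n+1})/I_{n+1}$ by the ideal generated by $l_n = \Phi^\Delta_n$ on the generators, hereditarity makes both $l_{n+1}$ and $\Phi^\Delta_{n+1}$, hence $\delta z_{n+1}$, vanish; reading off the coefficient of the monomial $\Delta(x_1)\,x_2\cdots x_{n+1}$ — which, as in the associative argument, is not touched by the relations of $I_{n+1}$ because no term of $\Phi^\Delta_n$ produces it — forces $c = 0$, so $l_{n+1} = \Phi^\Delta_{n+1}$.

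It then remains to secure the base of the induction, the commutative counterpart of Proposition~\ref{sec:main-results-4}, and this I expect to be the main obstacle. As in the associative case the monomial argument breaks at $n=2$, because $\Delta(x_1)x_2x_3$ already occurs in $\Phi^\Delta_2(x_1,x_2)\,x_3$, so $l_3$ must be handled by hand. Since $\Nat(3)^0$ and $\Nat(4)^0$ are one-dimensional, $l_3$ is governed by the single scalar $f_3$ (normalised by~(ii) to one of the two values giving $C_3 \in \{+1,-1\}$) and $l_4$ by $f_4$; the plan is to write the admissible $l_3$ and $l_4$ as explicit graded-symmetric expressions in $f_3,f_4$, impose that $l_3 = 0$ forces $l_4 = 0$, and solve the resulting linear system. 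I expect the value $C_3 = +1$ to reproduce $l_3 = \Phi^\Delta_3$ together with an integral hereditary recursion, while $C_3 = -1$ admits only a solution with non-integral coefficients (mirroring the values $A=C=1/2$, $D=E=-3/2$ appearing in the associative computation), which is excluded over $\mathbb{Z}$. The only genuine hazards are the symmetry and Koszul-sign bookkeeping, heavier here than in the associative case but conceptually routine; once this is dispatched, the base case combines with the inductive step to yield $l_k = \Phi^\Delta_k$ for all $k$, and hence uniqueness.
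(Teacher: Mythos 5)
Your proposal is correct and follows exactly the route the paper intends: the paper gives no separate proof of Theorem~\ref{sec:main-results-1_bis}, stating only that the associative arguments (Propositions~\ref{sec:main-results-3} and~\ref{sec:main-results-4}) translate to the commutative case, which is precisely the translation you carry out. Your observation that $\Nat(n+1)^0\cong\bfk$ makes the inductive step simpler, and your plan for the $n=2,3$ base case mirroring Proposition~\ref{sec:main-results-4}, are both consistent with the paper's implicit proof.
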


\begin{Exercise}
Explain how to construct non-hereditary \Linfty-braces.
\end{Exercise}

%\bibliography{b}
\def\cprime{$'$} \def\cprime{$'$}

\end{document}